\let\SavedRightarrow=\Rightarrow
\let\Rightarrow=\SavedRightarrow
\newcommand{\Bee }{\mathcal B}
\newcommand{\Iee }{\mathcal I}
\newcommand{\Wee }{\mathcal W}
\newcommand{\Uee }{\mathcal U}
\renewcommand{\int}{\operatorname{Int}}
\newtheorem{thm}{Theorem}%[section]
\newtheorem{pro}[thm]{Proposition}
\newtheorem{lem}[thm]{Lemma}
\newtheorem{cor}[thm]{Corollary}
\title{Embeddable properties of metric $\sigma$-discrete  spaces}
\subjclass[2000]{Primary: 54G12; Secondary: 03E10,  06A06.}
\keywords{Embedding, Scattered space, Metric $\sigma$-discrete  space}
\author{Szymon Plewik}
\address{Institute of Mathematics, University of Silesia, ul. Bankowa 14, 40-007 Katowice}
\email{plewik@math.us.edu.pl}
\author{Marta Walczy\'nska}
\address{Institute of Mathematics, University of Silesia, ul. Bankowa 14, 40-007 Katowice}
\email{mwalczynska@us.edu.pl}
\begin{document}

\maketitle

\begin{abstract} Dimensional types of metric scattered spaces are investigated.   Revised proofs of Mazurkiewicz-Sierpi\'nski and Knaster-Urbanik theorems  are presented. Embeddable properties of  countable metric  spaces are generalized onto uncountable metric $\sigma$-discrete  spaces.  Some related topics are also explored. For example: For each infinite cardinal number $\frak m$, there exist $2^{\frak m}$ many  non-homeomorphic metric scattered spaces of the cardinality $\frak m $; If $X \subseteq \omega_1$ is a stationary set, then the poset formed from  dimensional types of subspaces of $X$ contains uncountable  anti-chains and uncountable strictly descending chains. 
\end{abstract}

 \section{Introduction} \label{s1}

Suppose $X $ and $Y$ are topological   spaces. The symbol  $ X <_E Y$ means that $X$ is 
homeomorphic to a subspace of $Y$. If  $ X <_E Y$, then we say that $X$ has a dimensional type  smaller  or equal to  the dimensional type of $Y$. When $ X <_E Y$ and $ Y <_E X$, then $X$ and $Y$ have the same dimensional type, what we  denote briefly $X=_E Y$.  When  $ X <_E Y$ and is not fulfilled $ Y <_E X$, then  $X$ has a smaller dimensional type   than    $Y$.
  First time the relation $<_E$ was investigated by M. Fr$\acute{\mbox{e}}$chet \cite{fre}.  In \cite[p. 24]{sie1} W. Sierpi\'nski cites alternative names for dimensional types: type de dimensions,  Fr$\acute{\mbox{e}}$chet; Hom$\ddot{\mbox{o}}$ie, Mahlo. Basic properties and definitions relating to dimensional types are also discussed in textbooks  \cite{sie}, \cite{kur1} and \cite{kur}. K. Kuratowski uses the name topological rank for  dimensional type, \cite[p. 112]{kur1}.   
 It is widely known -  some authors treat them like a mathematical folklore, compare \cite{gil} - the following results. \par In \cite{ms} S. Mazurkiewicz and W. Sierpi\'nski proved the following two facts. \textit{There is continuum many  non-homeomorphic countable metric and scattered spaces}. \textit{A countable compact metric space $X$ is homeomorphic to the ordinal $\omega^\alpha n +1$}. In the second claim  $n=|X^{(\alpha)}| $ is a natural number and $X^{(\alpha)} $ is the first discrete derivative of $X$, where $\alpha \in \omega_1$. The countable ordinal $\omega^\alpha n +1$ is equipped with the order topology. \par    B. Knaster and K. Urbanik \cite{ku}:  \textit{Any countable metric scattered  space has a metric scattered compactification.}   An alternative proof is given in the monograph \cite[Theorem 6, p. 25]{kur1}.  \par  R. Telg$\acute{\mbox{a}}$rsky \cite[Theorem 9]{tel}: \textit{Any metric scattered space can be embedded into  a sufficiently large  ordinal number}. Independently, the same is also proved in \cite{af}.
 
The poset { $(P(\mathbb Q), <_E)$}, where   $P(\mathbb Q)$ is the family of all subsets of the rational numbers  $\mathbb Q$, is  described   by  W.D. Gillam in the paper \cite{gil}.  The  set ${\mathcal P}(\Bbb Q)/\!\!\!=_E$ of all equivalence classes $[X]=\{Y\subseteq \Bbb Q:Y=_E X\}$ is partially ordered by the relation $[ X]\leq_d [Y]$ whenever $X<_E Y.$ In \cite{gil}, it is shown that the poset $({\mathcal P}(\Bbb Q)/\!=_E ,\leq_d)$ has cardinality $\omega_1$ and  $[\Bbb Q]$  is the only element with $\omega_1$ many elements below it. Moreover, $({\mathcal P}(\Bbb Q)/\!\!\!=_E,\leq_d)$  lacks infinite anti-chains and infinite strictly descending chains. In fact,   $({\mathcal P}(\Bbb Q)/\!\!\!=_E,\leq_d)$ is described  using the  Cantor-Bendixson rank,   local homeomorphism invariants and  local embeddable properties regarding the position  of points in a countable metric scattered  space. Initially, we believed that
analogous invariants should work successfully in the case of  uncountable metric scattered    spaces. Now, we are going to check the rationality of those beliefs.

		For any space $X$, the $\alpha$-derivative of $X$, which is denoted  $X^{(\alpha)}$, is defined  inductively: 
			 $X^{(0)}=X$;
		 $X^{(\alpha+1)}= \{x\in X^{(\alpha)}: x \mbox{ is not isolated in } X^{(\alpha)}\};$
	 $X^{(\alpha)} = \bigcap\{X^{(\beta)}: \beta < \alpha\}$ for a limit ordinal $\alpha$.
			Thus, each $ X^{(\alpha)}$ is a closed subset of $X$. 	
	If there exists an ordinal $\alpha$ such that $X^{(\alpha)}= \emptyset$, then $X$ is  called a \textit{scattered}  space.  The smallest ordinal such that $X^{(\alpha)}=\emptyset$ is denoted $N(X)$ and is  called the Cantor-Bendixson  \textit{rank} of $X$. Other notions of set theory and topology will be used according to textbooks \cite{eng} and  \cite{kun}. In particular, the sum of  topological spaces  we use like in the  book \cite[p. 103]{eng}.
	
		The paper is organized as follows. The results, which we consider completely new ones are formulated as theorems or lemmas. Modifications of known facts or facts from  mathematical folklore are formulated as propositions or corollaries. Proofs of propositions refer to the original idea of S. Mazurkiewicz and W. Sierpi\'nski relying on the use of ordinal arithmetic. In fact, we   extend this arithmetic by  adding a new element, i.e. the subspace  $ I\subset \omega^2+1 $, compare Section \ref{s5}. 
		Our intention is to initiate research directions  of dimension types in terms of ordinals and  metric $\sigma$-discrete spaces. So, we carefully analyze the tools that have been used successfully in  countable cases.

\section{Remarks on ordinal arithmetic} \label{s2}
Ordinal arithmetic is comprehensively described in many textbooks of modern set theory, and so we only briefly discuss  aspects we need. Topological properties of subsets of ordinals will be considered only with the order topology, i.e. the topology generated by  open rays $ \{\beta: \beta <\alpha \} $ and $ \{\beta: \beta> \alpha \} $, where $ \alpha $ is an  ordinal. So,  we  reconsider schemes of ordinal arithmetic, which were used in the paper by S. Mazurkiewicz and W. Sierpi\'nski \cite{ms}.
For ordinal numbers, we will  use the convention $\alpha = \{\beta: \beta < \alpha\}$. If $ \beta \in \alpha $,  we write $\beta < \alpha $, except for phrases $n\in \omega$, where $n$ is a finite ordinal and $\omega$ is the first infinite ordinal. 	Suppose $\alpha$ and $\beta$ are ordinals, then $ \alpha + \beta$ is the unique  ordinal $\gamma$ which is isomorphic to  a copy of $\alpha$ followed by a copy of $\beta$. The  addition of ordinals is  associative, but not  commutative. Also $\beta < \alpha$ implies $\beta +\gamma \leq \alpha + \gamma$, for any ordinal $\gamma$.
	The  ordinal $\gamma$	added $n$-times  is denoted $\gamma \cdot n$.  If $\{ \lambda_n: n \in  \omega \}$ is a sequence of ordinals, then  $$\sum_{n \in \omega} \lambda_n = \sup \{\lambda_0 + \lambda_1 + \ldots + \lambda_n: n \in \omega\}.$$	
	The following limit ordinals are important because of  the above mentioned    Mazurkiewicz-Sierpi\'nski theorem. Put $ \omega^0=1 $, $\omega^1 = \omega$  and  define the countable limit ordinal $$\omega^\alpha = \sup \{\omega^{\beta}\cdot n: \beta <\alpha \mbox{ and } 0<n\in \omega\},$$  for each countable ordinal $\alpha$.  			
			 If $\beta < \omega^\alpha$, then the interval $(\beta,  \omega^\alpha)$ is isomorphic to $\omega^\alpha = [\emptyset, \omega^\alpha)$ and also these intervals  are homeomorphic.    
	 If  $\beta < \omega^\alpha$, then $$\omega^\alpha +1 = \beta + \omega^\alpha +1  =_E  \omega^\alpha + \beta +1.$$	
	 If $\gamma > \sup \gamma$  is  a
		countable  infinite ordinal, then there exist $ n\in \omega$ and an ordinal $\alpha$ such that   	$\omega^\alpha \cdot n +1 =_E \gamma$.
 If $\gamma$ is a limit ordinal, then there exist $n\in \omega$ and  ordinals $\alpha$ and $\beta$ such that the subspace $\omega^\alpha \cdot n +1 \setminus \{\beta \} \subseteq  \omega^\alpha \cdot n +1$ is homeomorphic to $ \gamma$.	
We omit  details of mentioned above facts. Instead of this, we  present  the following.

\textbf{Proposition.} \textit{If \, $0<\alpha$, then $N(\omega^\alpha)= \alpha$ and $N(\omega^\alpha+1)= \alpha+1$.} 
		\begin{proof} If $\alpha =1$, then $\omega +1$  is homeomorphic to a convergence sequence.  So, $(\omega +1)^{(1)}= \{\omega\}$ and $(\omega )^{(1)}= \emptyset$, hence  $N(\omega)=1$ and $N(\omega+1)=2$.
	
	Suppose, that the thesis holds for all non-zero $\beta < \alpha$. 
		If $\alpha = \beta +1$, using the induction assumptions, we get $$(\omega^\alpha +1)^{(\beta)}=\{  \omega^\beta \cdot n: 0<n \in \omega \} \cup \{\omega^\alpha\}=_E \omega +1.$$ Therefore 
	$(\omega^\alpha +1)^{(\alpha)}=\{\omega^\alpha \} $ and   $(\omega^\alpha )^{(\alpha)}=\emptyset.$ Hence $N(\omega^\alpha)=\alpha$ and  $N(\omega^\alpha+1)=\alpha + 1$.
	
	Suppose $\omega^\alpha = \sum_{n\in\omega} \omega^{\beta_n}$, where $\alpha = \sup_{n\in \omega}{\beta_n}$ is a limit ordinal. For any $\beta < \alpha$, by the induction assumptions, we have $$(\omega^\beta)^{(\alpha)}=\emptyset \mbox{\; and \;} \omega^\alpha  \in (\omega^\alpha +1)^{(\beta)} .$$  Bearing this in mind, we check that  $$(\omega^\alpha )^{(\alpha)}= \bigcup\{ ( \omega^{\beta_n} )^{(\alpha)}: n\in \omega \mbox{\; and \; } \beta_n < \alpha \}=\emptyset.$$  We still have  $\omega^\alpha  \in (\omega^\alpha +1)^{(\beta_n)},$  therefore  
	$(\omega^\alpha +1)^{(\alpha)}=\{\omega^\alpha \} .$  
	\end{proof}

  \section{On $\sigma$-discrete metric spaces} \label{s3}
	 A metric space is called \textit{$\sigma$-discrete}, if it is an union of countably many discrete subspaces. Any countable metric space, being countable sum of single points, is $\sigma$-discrete. In particular, 
  the space $\mathbb Q$ of all rational numbers is $\sigma$-discrete. 
	\begin{lem}\label{ll}
	Each  metric $\sigma$-discrete space $X$ is an union of countably many closed and discrete subspaces. \end{lem}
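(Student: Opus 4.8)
The plan is to upgrade a given decomposition into discrete pieces to one into \emph{uniformly} discrete pieces, exploiting the elementary fact that a uniformly discrete subset of a metric space is automatically closed; in other words, to show that ``$\sigma$-discrete'' and ``$\sigma$-uniformly-discrete'' coincide for metric spaces, which immediately yields the lemma.

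First I would fix a metric $d$ inducing the topology of $X$ and write $X=\bigcup_{n\in\omega}E_n$, where each $E_n$ is discrete as a subspace. For each $n$ and each $x\in E_n$, discreteness of $E_n$ together with metrizability of $X$ lets me choose a radius $r_x>0$ with $B(x,r_x)\cap E_n=\{x\}$ (take a ball around $x$ inside an open set of $X$ meeting $E_n$ only in $x$). I then stratify $E_n$ by the size of this radius: put $E_{n,k}=\{x\in E_n:\ r_x>1/k\}$, so that $E_n=\bigcup_{k\in\omega}E_{n,k}$ and hence $X=\bigcup_{n,k\in\omega}E_{n,k}$, still a countable union.

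The heart of the argument is that each $E_{n,k}$ is uniformly discrete, with separation constant $1/k$: if $x,y\in E_{n,k}$ are distinct, then $y\notin B(x,r_x)$, so $d(x,y)\ge r_x>1/k$. Once this is in hand, two consequences follow by routine metric reasoning. Each $E_{n,k}$ is discrete as a subspace, since every one of its points is isolated by a ball of radius $1/k$. And each $E_{n,k}$ is closed in $X$: a point in its closure is the limit of a sequence drawn from $E_{n,k}$, whose terms are eventually within $1/k$ of one another, forcing that sequence to be eventually constant and its limit to lie in $E_{n,k}$.

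I do not anticipate a genuine obstacle. The only points requiring care are that the radii $r_x$ must isolate $x$ \emph{within $E_n$} rather than within $X$ — which is precisely what discreteness of $E_n$ supplies — and that the stratification by $1/k$ is what converts this pointwise isolation data into the uniform lower bound on distances needed for closedness. Assembling the pieces, $X=\bigcup_{n,k\in\omega}E_{n,k}$ displays $X$ as a union of countably many closed discrete subspaces, as required.
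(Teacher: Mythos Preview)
Your argument is correct. Stratifying each discrete piece $E_n$ by the size of an isolating radius yields uniformly $1/k$-separated sets $E_{n,k}$, and uniform separation gives both discreteness and closedness in $X$ exactly as you describe.

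The paper's proof takes a different route: it invokes Bing's theorem to fix a $\sigma$-discrete base $\mathcal B=\bigcup_m\mathcal B_m$, and for each discrete piece $X_k$ and each level $m$ collects those $x\in X_k$ that are isolated within $X_k$ by some member of $\mathcal B_m$; since $\mathcal B_m$ is a discrete family of open sets and each chosen $V_x^m$ meets $X_k$ in the single point $x$, every point of $X$ has a neighborhood meeting at most one such $V_x^m$, hence at most one point of $X_{k,m}$, making $X_{k,m}$ closed and discrete. Your approach is more elementary---it uses nothing beyond the metric and avoids the metrization-theorem machinery---and makes the ``$\sigma$-discrete $=$ $\sigma$-uniformly-discrete'' phenomenon explicit. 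The paper's approach, on the other hand, is tailored to what comes next: the sets $X_{k,m}$ and the base elements $V_x^m$ are reused verbatim in the proof of Theorem~\ref{eoc}, so the Bing-base argument doubles as a setup for the disjoint-refinement construction, something your radius stratification does not directly provide.
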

	
	\begin{proof} Use the Bing theorem \cite[4.4.8]{eng} in the following way. Let $$\mathcal B= \bigcup \{\mathcal B_n: n\in \omega \}$$ be a $\sigma$-discrete base for $X$,  where each $\mathcal B_n$ is a discrete family. And let $X_0, X_1, \ldots $ be discrete subspaces summing $X$. If $x\in X_k$, then 
	fix $V_x^m\in\mathcal  B_m$ such that  $V_x ^m\cap X_k = \{ x\}$. If there is  no relevant  $V_x^m$, then put $V_x^m=\emptyset$. And put $$X_{k,m} = X_k \cap \bigcup \{V_x^m: x\in X_k \mbox{ and }  V_x ^m\in\mathcal B_m \}$$ and then check that sets $X_{k,m}$ are such that we need. 	\end{proof}

	Let $B(\frak m) = \frak m^\omega $ be the Baire space of weight $\frak m$, where $\frak m$ is an infinite  cardinal. Since $0\in \frak m$, we can  put
	$$C(\frak m) = \{y\in B(\frak m): \mbox{almost all coordinates of $y$ are equal to 0} \}$$ and consider $C(\frak m)$
with the topology inherited from $B(\frak m)$. Each 
Baire space $B(\frak m)$  is  metric and each   $ C(\frak m)$ is a $\sigma$-discrete metric subspace. 
Note that $ C(\omega_0)$ is a homeomorphic copy of the rational numbers and the Baire space $B(\omega_0)$ is homeomorphic to the irrational numbers. Therefore is why the next proposition says that spaces $C(\frak m)$ are analogues of the rational numbers. A characterization of the rational numbers generalized by the next proposition  is usually attributed to G. Cantor,  L. E. J. Brouwer or W. Sierpi\'nski.
	
\textbf{Proposition.}	 \textit{A nonempty metric $\sigma$-discrete  space, with all nonempty open subsets of weight $\frak m$,  is homeomorphic to   $C(\frak m)$. 
	 A metric $\sigma$-discrete  space of the weight $\frak m$ is homeomorphic to a subspace of $C(\frak m)$.}
	\begin{proof} See T. Przymusi\'nski \cite{prz}, compare Sz. Plewik \cite{ple}. \end{proof}

	\textbf{Proposition.}
\textit{
	A metric $\sigma$-discrete   space contains a homeomorphic copy of the rational numbers or it is scattered. }	
		
	\begin{proof} Let $X$ be  a   metric $\sigma$-discrete space which is not  scattered. Thus $X$ contains a dense in itself subspace which,  being metric and dense in itself, has to contain a homeomorphic copy of the rational numbers.\end{proof}

\begin{thm}\label{ms} 
	Any metric scattered space  is $\sigma$-discrete. 	\end{thm}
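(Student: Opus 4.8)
The plan is to sort the points of $X$ according to their Cantor--Bendixson rank and then use the metric to quantise, at each point, the radius of a ball witnessing that the point is isolated in the appropriate derivative. It is worth stressing first that the canonical decomposition $X=\bigcup\{X^{(\alpha)}\setminus X^{(\alpha+1)}:\alpha<N(X)\}$ already presents $X$ as a disjoint union of \emph{discrete} subspaces --- each $x\in X^{(\alpha)}\setminus X^{(\alpha+1)}$ is isolated in the closed set $X^{(\alpha)}$, hence isolated in $X^{(\alpha)}\setminus X^{(\alpha+1)}$ --- the only defect being that the index set $N(X)$ can be uncountable (for instance, the topological sum of the spaces $\omega^{\beta}+1$ over $\beta<\omega_1$ is metrizable and scattered and has rank $\omega_1$, by the Proposition of Section \ref{s2}). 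So metrizability has to be used precisely in order to compress those pieces into countably many.

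First I would fix the rank of each point. Since $X$ is scattered, the derivatives $X^{(\alpha)}$ form a decreasing (under inclusion) transfinite sequence of closed sets reaching $\emptyset$ at $\alpha=N(X)$; hence for every $x\in X$ the ordinals $\alpha$ with $x\in X^{(\alpha)}$ form a proper initial segment that is closed at limit stages, and therefore has a largest element, which I denote $\rho(x)$. Thus $x\in X^{(\rho(x))}$ while $x\notin X^{(\rho(x)+1)}$; equivalently, $x$ is isolated in the subspace $X^{(\rho(x))}$.

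Now fix a metric $d$ inducing the topology of $X$. For each $x$, isolatedness of $x$ in $X^{(\rho(x))}$ yields $\varepsilon>0$ with $\{y\in X:d(x,y)<\varepsilon\}\cap X^{(\rho(x))}=\{x\}$; taking $0<n\in\omega$ with $1/n<\varepsilon$ keeps this true with $1/n$ in place of $\varepsilon$. So, putting
$$Y_n=\{x\in X:\{y\in X:d(x,y)<1/n\}\cap X^{(\rho(x))}=\{x\}\}\qquad(0<n\in\omega),$$
we obtain $X=\bigcup\{Y_n:0<n\in\omega\}$. It remains to check that each $Y_n$ is a discrete subspace, and in fact I claim that distinct points of $Y_n$ are at distance at least $1/n$ apart. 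Indeed, let $x\neq y$ in $Y_n$ and assume, without loss of generality, $\rho(x)\le\rho(y)$; then $X^{(\rho(y))}\subseteq X^{(\rho(x))}$ by monotonicity of the derivatives, so $y\in X^{(\rho(x))}$, and if we had $d(x,y)<1/n$ the defining condition of $Y_n$ at $x$ would force $y\in\{x\}$, a contradiction. Hence each $Y_n$ is $(1/n)$-uniformly discrete, and $X=\bigcup_{0<n\in\omega}Y_n$ exhibits $X$ as a union of countably many discrete subspaces; should closed discrete pieces be desired, one then invokes Lemma \ref{ll}.

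I do not anticipate a real obstacle. The one place requiring care is the rank comparison in the final step: it is essential that the ball of radius $1/n$ about $x$ is made to avoid $X^{(\rho(x))}\setminus\{x\}$, with $\rho(x)$ the rank of $x$ itself rather than of its partner $y$, so that any $y$ of rank at least $\rho(x)$ is automatically excluded from that ball. Everything else --- the well-definedness of $\rho$, the quantisation of $\varepsilon$ --- is routine, and metrizability enters only to replace abstract isolating neighbourhoods by balls whose radii can be bounded below by the fixed sequence $1/n$, so the argument itself uses the metric and not merely first countability of $X$.
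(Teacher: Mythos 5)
Your argument is correct and is essentially identical to the paper's own proof (attributed there to K.~P.~Hart): assign to each point its Cantor--Bendixson rank $\rho(x)$, pick $1/n$ so that the $1/n$-ball about $x$ meets $X^{(\rho(x))}$ only in $x$, and observe via the rank comparison that each resulting piece is $1/n$-uniformly discrete. The only cosmetic difference is that the paper fixes one $n_x$ per point rather than taking all admissible $n$, and notes directly that uniform discreteness already makes the pieces closed, so no appeal to Lemma~\ref{ll} is needed.
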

		\begin{proof} 
K. P. Hart offered us the following elementary reasoning. Let  $(X,\varrho)$ be a metric scattered space. For every $x\in X$, let   $\alpha_x$ be the ordinal such that $ x\in X^{(\alpha_x)}$  and $ x\notin X^{(\alpha_x+1)}$ , and then fix a natural number $n_x$ such that $B(x,\frac{1}{n_x})\cap X^{(\alpha_x)}=\{x\}.$  Finally put $$D_n=\{x\in X: n_x=n\}.$$ If $x, y \in D_n$ and  $x \not= y$, then $\varrho (x,y) \geq \frac{1}{n}$. So, $X$ is the countable union of closed and discrete sets $D_n$. 
	\end{proof}

Applying metrization theorems -- for example the  Stone theorem, compare \cite[4.4.1]{eng} -- one obtains the following.
A metric locally  $\sigma$-discrete space is $\sigma$-discrete. And then one can check that if a metric space $X$ is not $\sigma$-discrete, then the set $$ \{x\in X: \mbox{ no neighborhood of $x$ is $\sigma$-discrete} \}$$ is dense in itself. It  gives us an other  proof of Theorem \ref{ms}.

Each metric $\sigma$-discrete  space  is paracompact in a stronger sense.

\begin{thm}\label{eoc}
Every  open cover $\mathcal{U}$ of  a  metric $\sigma$-discrete  space $X$ has a disjoint open refinement.    \end{thm}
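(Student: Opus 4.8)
The plan is to reduce the theorem to the Baire space $B(\frak m)$ and then exploit its canonical tree of clopen sets. First I would invoke the characterization from Section~\ref{s3}: a metric $\sigma$-discrete space of weight $\frak m$ is homeomorphic to a subspace of $C(\frak m)$, and hence of $B(\frak m)=\frak m^{\omega}$. If $\frak m$ is finite, then $X$ is a finite discrete space and the conclusion is immediate (and the empty cover is trivial), so I would assume $\frak m$ infinite and simply identify $X$ with a subspace of $\frak m^{\omega}$. I would then recall the standard facts about $\frak m^{\omega}$: it has the base of clopen sets $[s]=\{y\in\frak m^{\omega}:y{\restriction}|s|=s\}$ indexed by finite sequences $s\in\frak m^{<\omega}$, any two of these sets are disjoint unless one contains the other (with $[s]\subseteq[t]$ precisely when $t$ is an initial segment of $s$), and consequently $\{[s]\cap X:s\in\frak m^{<\omega}\}$ is a base of relatively clopen subsets of $X$ arranged, under inclusion, like a tree.

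Given an open cover $\mathcal U$ of $X$, I would pass to the shortest basic neighborhoods that fit into the cover. For each $x\in X$ choose a member of $\mathcal U$ containing $x$; since the sets $[x{\restriction}n]\cap X$, $n\in\omega$, form a neighborhood base at $x$ in $X$, there is a least $n_x\in\omega$ such that $[x{\restriction}n_x]\cap X$ is contained in some member of $\mathcal U$. I would then set $$\mathcal V=\{\,[x{\restriction}n_x]\cap X:x\in X\,\}.$$ By construction every element of $\mathcal V$ is relatively clopen in $X$, the family $\mathcal V$ covers $X$ because $x\in[x{\restriction}n_x]\cap X$, and it refines $\mathcal U$ by the choice of $n_x$.

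The one point that needs care is disjointness. Suppose $[x{\restriction}n_x]\cap X$ and $[y{\restriction}n_y]\cap X$ share a point $z$, and assume $n_x\le n_y$. Then $x{\restriction}n_x=z{\restriction}n_x=y{\restriction}n_x$, so $[y{\restriction}n_y]\subseteq[y{\restriction}n_x]=[x{\restriction}n_x]$; were $n_x<n_y$, the set $[y{\restriction}n_x]\cap X=[x{\restriction}n_x]\cap X$ would already lie inside a member of $\mathcal U$, contradicting the minimality of $n_y$. Hence $n_x=n_y$ and the two members of $\mathcal V$ coincide. Therefore distinct members of $\mathcal V$ are disjoint, and $\mathcal V$ is the required disjoint open refinement; in other words $X$ is ultraparacompact.

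I expect the genuine obstacle to be conceptual rather than computational: seeing that the canonical tree base of $B(\frak m)$ is exactly what permits disjoint refinements, and that choosing the shortest defining sequence forces the refinement to be an antichain in that tree. Everything past that is routine bookkeeping with $\frak m^{<\omega}$. A self-contained metric alternative is also available --- write $X=\bigcup_n F_n$ with each $F_n$ closed and discrete by Lemma~\ref{ll}, pick for each point a radius witnessing that a ball around it lies in a member of $\mathcal U$, and assemble the partition level by level --- but the Baire-space argument is shorter and makes transparent why $\sigma$-discreteness is indispensable here: a connected metric space such as $\Ra$ is paracompact yet admits no nontrivial disjoint open refinement.
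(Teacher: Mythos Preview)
Your proof is correct and takes a genuinely different route from the paper's. The paper works intrinsically: it first observes (via an argument of Engelking) that a metric $\sigma$-discrete space has a base of closed-open sets, then uses the decomposition $X=\bigcup X_{k,m}$ into closed discrete pieces from Lemma~\ref{ll}, covers each $X_{k,m}$ by a discrete family of closed-open sets subordinate to $\mathcal U$, enumerates these families as $\mathcal W_0,\mathcal W_1,\ldots$, and at stage $n$ subtracts off the closed-open union $\bigcup_{i<n}\cup\mathcal W_i$ so that the resulting families are pairwise disjoint. Your argument instead outsources the combinatorics to the Baire space via the Przymusi\'nski embedding stated just before Theorem~\ref{eoc}: once $X$ sits inside $\frak m^{\omega}$, the tree of basic sets $[s]$ does all the work, and the minimality of $n_x$ forces the chosen sets to form an antichain, hence to be pairwise disjoint. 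Your route is shorter and conceptually transparent once the embedding is granted; the paper's route is self-contained and shows directly how the $\sigma$-discrete structure drives the construction --- essentially the ``level by level'' alternative you yourself sketch in the closing paragraph.
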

\begin{proof} Modifying   Engelking's reasoning 1.3.2 from \cite{enw}, one can obtain the following. 
	If a normal space is an union of countably many closed and discrete subspaces, then it has a base consisting of closed-open sets. So, any metric $\sigma$-discrete  space has a base consisting of closed-open sets. 
	
	Let closed and discrete sets $X_{k,m} \subseteq X$ are defined as in the proof of lemma \ref{ll}.  Fix $k$ and $m$. The family $$ \{V^m_x: x \in X_{k,m}\} \subseteq\mathcal B_m$$ is discrete. So, we can choose the closed-open sets  $W^m_x \subseteq W \in \mathcal U$ and  $W^m_x \subseteq V^m_x$, for each $x\in X_{k,m}$,
	 such that $$X_{k,m} \subseteq \bigcup \{W^m_x: x \in X_{k,m}\} $$  and the union $\bigcup \{W^m_x: x \in X_{k,m}\}$ is closed-open. 	Note that, the family $\{W^m_x: x \in X_{k,m}\}$, being discrete,  consists of pairwise disjoint sets.
 Sets  $X_{k,m}$   enumerate as  $\{ Y_n: n \in \omega\} $. 
Let $\mathcal W_0= \{W^m_x: x \in  Y_0 =X_{k,m}\}.$ If  $ X_{k,m}=Y_n $ and  families of closed-open sets   $\mathcal W_0 , \mathcal W_1, \ldots,  \mathcal W_{n-1}$ are already defined such that unions $\cup \mathcal W_0 , \cup\mathcal W_1, \ldots, \cup \mathcal W_{n-1}$ are closed-open sets, then let $\mathcal W_n$ be the family $$ \{W^m_x\setminus \bigcup \{ \cup\mathcal W_i: i<n\}: x \in Y_n \mbox{ and } x \notin \bigcup \{ \cup\mathcal W_i: i<n\}\}.$$ The union $\bigcup\{\mathcal W_n: n \in \omega\}$ is a needed refinement of $\mathcal{U}$. \end{proof}

We get a modification of the  Telg$\acute{\mbox{a}}$rsky result, see \cite[Theorem 3]{tel}.
\begin{thm}\label{tel} 
Every base of a  metric $\sigma$-discrete  space  contains a locally finite open refinement. \end{thm}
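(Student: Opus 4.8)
\smallskip
\noindent\emph{Sketch of a proof.} I would build the required subfamily directly, by recursion along the closed discrete decomposition supplied by Lemma \ref{ll}, shrinking the diameters of the chosen sets level by level so that the resulting cover comes out locally finite and not merely $\sigma$-discrete.

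First, using Lemma \ref{ll}, write $X=\bigcup_{n\in\omega}X_n$ with every $X_n$ closed and discrete; replacing $X_n$ by $X_0\cup\dots\cup X_n$ one may assume $X_0\subseteq X_1\subseteq\cdots$, and (the case $X=\emptyset$ being trivial) that every $X_n\neq\emptyset$. Fix a metric $\varrho$. Then I would recursively choose discrete families $\mathcal W_n\subseteq\mathcal B$: given $\mathcal W_0,\dots,\mathcal W_{n-1}$, put $U_n=\bigcup(\mathcal W_0\cup\dots\cup\mathcal W_{n-1})$ and $X_n^{*}=X_n\setminus U_n$ (again closed and discrete), and --- since the recursion will keep $X_{n-1}\subseteq U_n$, so that $\varrho(x,X_{n-1})>0$ for $x\in X_n^{*}$ --- set $\varepsilon_x=\tfrac13\varrho(x,X_{n-1})$ for $x\in X_n^{*}$ when $n\ge1$ and impose no restriction when $n=0$; by collectionwise normality of the metrizable space $X$ choose a discrete open family $\{G_x:x\in X_n^{*}\}$ with $x\in G_x\subseteq B(x,\varepsilon_x)$, and pick $V_x\in\mathcal B$ with $x\in V_x\subseteq G_x$; let $\mathcal W_n=\{V_x:x\in X_n^{*}\}$. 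A routine induction gives that $\mathcal W_0\cup\dots\cup\mathcal W_n$ covers $X_n$ (so indeed $X_{n-1}\subseteq U_n$ and the recursion is legitimate) and that the sets $X_k^{*}$ are pairwise disjoint. Set $\mathcal V=\bigcup_n\mathcal W_n\subseteq\mathcal B$; since $X=\bigcup_nX_n$, the family $\mathcal V$ covers $X$ and consists of members of $\mathcal B$.

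The step I expect to be the real obstacle is verifying that $\mathcal V$ is locally finite. The key computation is that for $k>m$ and $x\in X_k^{*}$ one has $\overline{V_x}\subseteq\overline{B(x,\varepsilon_x)}$ with $\varepsilon_x\le\tfrac13\varrho(x,X_{k-1})\le\tfrac13\varrho(x,y)$ whenever $y\in X_{k-1}$, hence $\varrho(y,V_x)\ge\tfrac23\varrho(x,y)$, so $V_x$ can meet $B(y,\delta)$ only when $\varrho(x,y)<\tfrac32\delta$. Fixing $y\in X$ with $m$ least such that $y\in X_m$ (so $y\in X_{k-1}$ and $y\notin X_k^{*}$ for all $k>m$), I would argue that $y$ is not an accumulation point of $Z:=\bigcup_{k>m}X_k^{*}$. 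Otherwise there are $x_j\to y$ with $x_j\in X_{k_j}^{*}$, $k_j>m$; since each $X_k$ is closed and discrete it has no accumulation point in $X$, so only finitely many $x_j$ lie in a given $X_k^{*}$, and after thinning $k_j\to\infty$. But $\mathcal W_0\cup\dots\cup\mathcal W_m$ covers $X_m\ni y$, so some member $V^{*}\in\mathcal W_i$ with $i\le m$ contains a ball about $y$, forcing $x_j\in V^{*}\subseteq U_{i+1}\subseteq U_{k_j}$ for large $j$ and contradicting $x_j\notin U_{k_j}$. Hence some ball $B(y,\tfrac32\delta_0)$ misses $Z$, so no member of any $\mathcal W_k$ with $k>m$ meets $B(y,\delta_0)$; together with local finiteness of the finite union $\mathcal W_0\cup\dots\cup\mathcal W_m$ this produces a ball about $y$ meeting only finitely many members of $\mathcal V$.

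The heart of the matter is exactly that non-accumulation claim: the open set of $\mathcal W_0\cup\dots\cup\mathcal W_m$ covering a point of $X_m$ automatically swallows a whole tail of any sequence converging to it, so those tail points are already covered and never resurface in the later $X_k^{*}$ --- and it is this absorption, invisible at the level of the crude $\sigma$-discrete cover $\{V_x\}$, that forces local finiteness. If one prefers to avoid collectionwise normality, the discrete expansions $\{G_x\}$ can instead be extracted from a locally finite open refinement (Stone's theorem) of the cover $\{B(x,\varepsilon_x):x\in X_n^{*}\}\cup\{X\setminus X_n^{*}\}$ once a separation radius $\varrho(x,X_n^{*}\setminus\{x\})$ is also built into $\varepsilon_x$; only local finiteness of each $\mathcal W_n$, not full discreteness, is used in the argument.
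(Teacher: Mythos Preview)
Your argument is correct, and it follows a genuinely different path from the paper's. The paper leans on the zero-dimensional machinery developed just above (Theorem~\ref{eoc}): at each stage it produces a \emph{clopen} cover of the current closed discrete piece, so the leftover space $X\setminus Y$ is itself clopen and the later families $\mathcal U^{k}$ are chosen entirely inside that complement; local finiteness then falls out because each $A\in\mathcal V^{n}$ is a clopen set meeting no member of $\mathcal U^{k}$ for $k>n$. Your proof never touches clopenness or Theorem~\ref{eoc}; instead you control the \emph{diameters} of the chosen basic sets by the metric distance $\varrho(x,X_{n-1})$ and then extract local finiteness from the estimate $\varrho(y,V_x)\ge\tfrac23\varrho(x,y)$ together with the absorption observation that any point already covered at stage $\le m$ cannot be approached by points of the later $X_k^{*}$. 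In short: the paper uses disjoint clopen refinements, you use shrinking radii plus collectionwise normality. Your route is more self-contained (it does not presuppose the zero-dimensionality result) and would transplant to other metric settings; the paper's route is shorter once Theorem~\ref{eoc} is in hand and makes the structure of the refinement more transparent, since the auxiliary family $\bigcup_n\mathcal V^n$ is an honest partition into clopen sets.
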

\begin{proof} Let $X$ be a  metric $\sigma$-discrete  space such that $X= X_0 \cup X_1\cup \ldots ,$ where subspaces $X_n$ are  closed,  discrete and pairwise disjoint. Fix a base $\Bee$. Afterward, apply the following algorithm.  Choose a cover $\Uee \subseteq \Bee$ such that if $x\in X_0$ and $x\in A\in \Uee$, then  $  A \cap X_0= \{x\}$. By Theorem \ref{eoc}, the cover $\Uee$ has a disjoint open refinement $\Wee$. 
Choose  a refinement  $\Uee^* \subseteq \Bee$ and   a disjoint open refinement  $\Wee^*$  such that  $$\Wee^* \prec \Uee^* \prec \Wee \prec \Uee.$$ Without loss of generality, we can assume that there exists an unique neighborhood $A_x \in \Uee^*$ such that $A_x \cap X_0= \{x\}$, for each $x\in X_0$. Let $\mathcal U^0$ be the family of all such selected sets $A_x$. Thus, $\mathcal U^0$ and  $$\mathcal V^0= \{A \in \Wee^*: A \cap X_0 \not= \emptyset \}$$ are  discrete families.  Since  $\mathcal V^0 \subseteq \Wee^*$, then the union of all elements of $\mathcal V^0$ is a closed-open set.

 Assume that  discrete families $\mathcal U^0, \mathcal U^1, \ldots , \mathcal U^n$ and $\mathcal V^0, \mathcal V^1,
 \ldots ,\mathcal V^n$ are already defined such that the union $Y = \bigcup\{ A \in  \mathcal V^k: 0\leq k \leq n\}$ -- of closed-open and pairwise disjoint sets -- is a closed-open subset of $X$. Repeat the above algorithm by substituting $ X \setminus Y $ for  $ X $ and $\{A \in \Bee: A \subseteq X \setminus Y\}$ for $\Bee$ and $X_{n+1} \setminus Y$ for  $X_0$.
As a result, we get a discrete family $\mathcal U^{n+1} \subseteq \Bee$ and a discrete family $\mathcal V^{n+1}$ consisting of pairwise disjoint closed-open sets. From the properties of our algorithm  we get that $\mathcal U =\mathcal U^0 \cup\mathcal U^1\cup \ldots \subseteq  \Bee$ is a locally finite cover of $X$. Indeed, the family $\mathcal V^0 \cup\mathcal V^1\cup \ldots$ is a disjoint open refinement of $\mathcal U$. Also, each $A \in \mathcal V^n$ meets no  element of $\mathcal U^k$, for $k> n$. Thus, if $x\in A \in\mathcal V^n$, then  there exist open neighborhoods $B_0, B_1, \ldots , B_{n-1}$ of $x$ such that any $B_k$ meets at most  one element of the discrete family  $\mathcal U^k$. Therefore, the intersection $A \cap B_0 \cap B_1 \cap \ldots \cap  B_{n-1}$  meets at most finitely many elements of the cover $\mathcal U \subseteq \Bee$.  
 \end{proof}

In other words, Theorem \ref{tel} says that each  metric $\sigma$-discrete  space is totally paracompact. R.  Telg$\acute{\mbox{a}}$rsky  \cite{tel} only shows that metric scattered spaces are totally paracompact,  so  we receive a little stronger result.
Simplified versions of Theorem \ref{eoc} are applied in papers by V. Kannan and  M. Rajagopalan \cite[(1974)]{kr}, A. Arosio and A.V. Ferreira \cite[(1980)]{af} and R. Telg$\acute{\mbox{a}}$rsky \cite[(1968)]{tel}.  
		 Note that, for similar facts it is applied phrase "Every  { finite} open cover of  ..." in textbooks on dimension theory, 
		for example in \cite{enw}  or \cite{eng}. 		
		
		Now,   discuss  constructions which will be used in futher proofs. Let $\{X_\beta: \beta < \alpha \}$ be a family of scattered spaces such that $X_\beta^{(\beta)}= \{g_\beta\}.$ If additionally $\alpha$ is a limit ordinal, then let $J (\{X_\beta: \beta < \alpha \})	$ be the hedgehog space with spininess $X_\beta$. 	The hedgehog space is formed by gluing points $g_\beta$ into the point $g.$ The metric is determined such that  points of  $ X_\beta$ are at the same distance as in $X_\beta$, but  the distance between  points from different spininess  is obtained by the addition of distances of these points from $g$. Since $J (\{X_\beta: \beta < \alpha \})^{(\alpha)} = \{g\},$  this hedgehog space is metric scattered with the one-point  $\alpha$-derivative.

	\begin{pro}\label{poc}
For any ordinal $\alpha$ there exists  a  metric scattered  space  with the one-point $\alpha$-derivative.    \end{pro}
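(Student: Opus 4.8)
The plan is a transfinite induction on $\alpha$: assume that for every $\beta < \alpha$ there already is a metric scattered space $X_\beta$ with $X_\beta^{(\beta)} = \{g_\beta\}$, and produce one for $\alpha$. The case $\alpha = 0$ is trivial, a one-point space. When $\alpha$ is a limit ordinal, almost all the work has been done in the paragraph preceding the statement: I would simply take $X_\alpha = J(\{X_\beta : \beta < \alpha\})$, the hedgehog with spininess the $X_\beta$, which is metric scattered and satisfies $X_\alpha^{(\alpha)} = \{g\}$. The only thing to spell out is the verification of that last equality, which comes down to two observations: a point $x$ of a spine $X_\beta$ with $x \neq g$ has a ball in the hedgehog contained entirely inside $X_\beta$, so it leaves the derivatives at stage $\beta < \alpha$; and $g$ stays in $X_\alpha^{(\gamma)}$ for every $\gamma < \alpha$ because in each spine $X_\beta$ with $\gamma < \beta < \alpha$ it is a non-isolated point of $X_\beta^{(\gamma)}$.

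So the genuine content is the successor case $\alpha = \beta + 1$. First I would take $X_\beta$ from the induction hypothesis and, replacing its metric by its truncation at $1$, assume $X_\beta$ has diameter at most $1$. Then I would set $Z := \{p\} \cup \bigsqcup_{n \in \omega} X^{(n)}$, where each $X^{(n)}$ is a copy of $X_\beta$ carrying the metric $\varrho_n$ of $X_\beta$ scaled by $2^{-n}$, $g^{(n)} \in X^{(n)}$ is the copy of $g_\beta$, and $p$ is a new point, equipped with the ``bouquet'' metric: inside each $X^{(n)}$ it is $\varrho_n$; for $x \in X^{(n)}$, $y \in X^{(m)}$ with $n \neq m$ it is $\varrho_n(x, g^{(n)}) + |2^{-n} - 2^{-m}| + \varrho_m(y, g^{(m)})$; and $d(p, x) = 2^{-n} + \varrho_n(x, g^{(n)})$ for $x \in X^{(n)}$. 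Checking that $d$ is a metric is routine, and it is clear that distinct copies lie at positive distance from one another and that $d(p, X^{(n)}) = 2^{-n}$, so each $X^{(n)}$ is an open (indeed clopen) subset of $Z$.

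The key step, and the one I expect to demand the most care, is the derivative computation for $Z$. Since $X^{(n)}$ is open in $Z$, one has $Z^{(\gamma)} \cap X^{(n)} = (X^{(n)})^{(\gamma)}$ for every $\gamma$; in particular $Z^{(\beta)} \cap X^{(n)} = (X^{(n)})^{(\beta)} = \{g^{(n)}\}$, since rescaling the metric does not change the homeomorphism type. Next, $g^{(n)} \in (X^{(n)})^{(\gamma)}$ for all $\gamma \le \beta$ (derivatives decrease and $(X^{(n)})^{(\beta)}$ is the single point $g^{(n)}$), and $d(p, g^{(n)}) = 2^{-n} \to 0$; hence the set $\{g^{(n)} : n \in \omega\}$ is contained in $Z^{(\gamma)}$ and has $p$ as an accumulation point, for every $\gamma \le \beta$, so $p \in Z^{(\gamma)}$ for every $\gamma \le \beta$ — at limit stages one invokes the definition of the derivative as an intersection. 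Together these give $Z^{(\beta)} = \{g^{(n)} : n \in \omega\} \cup \{p\}$, a convergent sequence with its limit. From here the last two steps finish it: each $g^{(n)}$ is isolated in $Z^{(\beta)}$ (already isolated in the open set $X^{(n)} \cap Z^{(\beta)} = \{g^{(n)}\}$), whereas $p$ is not, so $Z^{(\beta+1)} = \{p\}$ and then $Z^{(\beta+2)} = \emptyset$, which also shows $Z$ is scattered. The induction hypothesis on $X_\beta$ is used twice in this argument: once to identify $(X^{(n)})^{(\beta)}$, and once — through $g_\beta \in X_\beta^{(\gamma)}$ for all $\gamma \le \beta$ — to guarantee that the convergent sequence $\{g^{(n)}\}$, hence $p$ itself, survives all the way up to level $\beta$.
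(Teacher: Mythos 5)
Your proof is correct and follows essentially the same route as the paper's: transfinite induction, with the hedgehog $J(\{X_\beta:\beta<\alpha\})$ at limit stages and, at successor stages, a space consisting of countably many clopen copies of $X_\beta$ accumulating at one new point, so that the $\beta$-derivative becomes a convergent sequence of the glue points and the $(\beta+1)$-derivative is a singleton. The only (harmless) difference is that the paper realizes the successor step as the subspace $Y_\beta\times(\omega+1)\setminus\{(y,\omega):y\neq g_\beta\}$ of a product, whereas you use a metric bouquet of copies scaled to diameter $2^{-n}$; your verification of the derivative computation is more detailed than the paper's.
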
 
\begin{proof} If $\alpha\in \omega_1$, then the ordinal $\omega^\alpha +1$  satisfies the thesis. Suppose that for each $\beta < \alpha$ there exists a  metric scattered space $Y_\beta$ such that $Y_\beta^{(\beta)}= \{g_\beta\}$.
If  $\alpha= \beta +1$, then put $$X =  Y_\beta\times (\omega+1)\setminus \{ (y,\omega): y\in Y_\beta \mbox{ and } y\not=g_\beta \}.$$ When $X$ is  equipped with the topology  inherited from the product topology, then $X$ is a metric space such that $X^{(\alpha)}= \{(g_\beta,\omega)\}.$ 
 If $\alpha$ is a limit ordinal, then we construct $X$ adapting the construction of a hedgehog space, compare \cite[4.1.5.]{eng}.  For $\beta < \alpha$, spaces  $Y_\beta$ are homeomorphic to spininess of the hedgehog space $X$ and the point formed by gluing points $g_\beta$, will be the only point in the space $X$ belonging to its $\alpha$-derivative.    \end{proof}

\begin{cor} Let $\frak{m}$ be an infinite cardinal and $\alpha$ be an ordinal such that $\frak{m}\leq \alpha <\frak{m}^+$. There exists a metric scattered space  of the cardinality $\frak{m}$ which has  nonempty $\alpha$-derivative. 
\end{cor}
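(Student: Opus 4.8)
The plan is to upgrade Proposition~\ref{poc} with a cardinality count. First, since $\frak m\le\alpha<\frak m^{+}$ and we use the convention $\alpha=\{\beta:\beta<\alpha\}$, the inclusion $\frak m\subseteq\alpha$ gives $|\alpha|\ge\frak m$, while $\alpha<\frak m^{+}$ gives $|\alpha|\le\frak m$; hence $|\alpha|=\frak m$, and in particular $\alpha$ is infinite. So it suffices to produce a metric scattered space $X$ with $X^{(\alpha)}\ne\emptyset$ and $|X|=\frak m$, and I claim the space built in the proof of Proposition~\ref{poc} already works once its cardinality is controlled. For an ordinal $\gamma$, write $Y_\gamma$ for the metric scattered space constructed there, so that $Y_\gamma^{(\gamma)}=\{g_\gamma\}$.

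For the upper bound I would prove, by transfinite induction running alongside that construction, that $|Y_\gamma|\le\max(\aleph_0,|\gamma|)$ for every ordinal $\gamma$. If $\gamma<\omega_1$ one takes $Y_\gamma=\omega^{\gamma}+1$, which is countable, so the bound holds. If $\gamma=\beta+1$, then $Y_\gamma=Y_\beta\times(\omega+1)\setminus\{(y,\omega):y\in Y_\beta \text{ and } y\ne g_\beta\}$, so $|Y_\gamma|\le|Y_\beta|\cdot\aleph_0\le\max(\aleph_0,|\beta|)=\max(\aleph_0,|\gamma|)$. If $\gamma$ is a limit ordinal, then $Y_\gamma$ is the hedgehog space with spininess $\{Y_\beta:\beta<\gamma\}$, whence $|Y_\gamma|\le\sum_{\beta<\gamma}|Y_\beta|\le|\gamma|\cdot\sup_{\beta<\gamma}\max(\aleph_0,|\beta|)=\max(\aleph_0,|\gamma|)$, using that $\gamma$ is infinite so $\sup_{\beta<\gamma}|\beta|\le|\gamma|$. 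Taking $\gamma=\alpha$ yields $|Y_\alpha|\le\max(\aleph_0,\frak m)=\frak m$.

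For the matching lower bound, recall that every nonempty closed subspace of a scattered space is itself scattered and hence has an isolated point, and that each derivative $Y_\alpha^{(\beta)}$ is a closed subset of $Y_\alpha$. Since $Y_\alpha^{(\alpha)}=\{g_\alpha\}\ne\emptyset$, for each $\beta<\alpha$ the set $Y_\alpha^{(\beta)}$ is a nonempty closed subspace, so $Y_\alpha^{(\beta)}\setminus Y_\alpha^{(\beta+1)}\ne\emptyset$; as these differences are pairwise disjoint while $\beta$ ranges over $\alpha$, we get $|Y_\alpha|\ge|\alpha|=\frak m$. Together with the previous paragraph this gives $|Y_\alpha|=\frak m$, and $Y_\alpha^{(\alpha)}\ne\emptyset$, which proves the corollary (in fact with a one-point $\alpha$-derivative). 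If one prefers to avoid this lower-bound argument, one may instead take the topological sum $Y_\alpha\oplus D$ with $D$ a discrete metric space of cardinality $\frak m$: it is metric and scattered, has cardinality $\frak m$, and since $\alpha\ge\frak m\ge\aleph_0\ge1$ its $\alpha$-derivative coincides with $Y_\alpha^{(\alpha)}$. The only delicate point is the cardinality bookkeeping in the limit step above — ensuring the index set $\alpha$ does not inflate $|Y_\alpha|$ beyond $\max(\aleph_0,|\alpha|)$ — and this is exactly where the hypothesis $\alpha<\frak m^{+}$ is used.
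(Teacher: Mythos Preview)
Your proof is correct and follows the same approach as the paper: you take the space $Y_\alpha$ from Proposition~\ref{poc} and verify that it has cardinality $\frak m$. The paper's own proof is a one-line remark that this space ``can be constructed so to have the cardinality $\frak m$,'' and your argument supplies exactly the cardinality bookkeeping (the inductive upper bound $|Y_\gamma|\le\max(\aleph_0,|\gamma|)$ and the lower bound from the nonempty Cantor--Bendixson levels) that the paper omits.
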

\begin{proof}  Each metric scattered space $X$, where $X^{(\alpha)}\not=\emptyset$ and  $\frak m \leq \alpha< \frak m^+,$ from  the above proposition  can be constructed so to have the cardinality $\frak m$. \end{proof}

	\section{On proofs of  Mazurkiewicz-Sierpi\'nski  and Knaster-Urbanik theorems} \label{s4}
Let us demonstrate,  how to use the Telg$\acute{\mbox{a}}$rsky idea -- modified here as Theorem \ref{eoc}, to  simplify a proof of the Mazurkiewicz-Sierpi\'nski theorem: \textit{If $X$ is a countable compact metric space, then  $X$ is homeomorphic to the ordinal  $\omega^\alpha n +1$, where $\alpha < \omega_1$ and $n\in \omega$ are uniquely determined}.
Assume that $X$ is a countable compact metric space.  If the derivative  $X^{(1)}$ is empty, then $X$ has to be finite since it is compact, hence   $X$ is homeomorphic to the ordinal $\omega^0 \cdot |X|=1 \cdot |X|$.  If { $|X^{(1)}|= n ,$ where $0 <n \in \omega $}, then $X$ has to be  the sum of $n$ copies of a convergent sequence, hence  
  $X$ is homeomorphic to the ordinal $\omega \cdot n +1$.  
	Assume inductively that if  $N(X) \leq \alpha$, then  $X$ is homeomorphic to the ordinal $\omega^\beta \cdot n +1$, where  $\beta <\alpha$ and $n\in \omega$.  
Now suppose that   $|X^{(\alpha)}|= 1 $. By Theorem \ref{eoc} -  the difference $X\setminus X^{(\alpha)}$ is an infinite sum of pairwise disjoint  closed-open subsets, each one has the empty $\alpha$-derivative.   
The subspace $X \setminus X^{(\alpha)}$ is homeomorphic to the sum $$ (\omega^{\beta_0} \cdot n_0 +1) \oplus (\omega^{\beta_1} \cdot n_1 +1) \oplus \dots ,$$ by the induction conditions. If $\alpha= \gamma +1$, then one can assume that every $\beta_n=\gamma$. If $\alpha$ is a limit ordinal, then every $\beta_n< \alpha$ and $\lim_{n\to \infty} \beta_n = \alpha$. In both cases we obtain  that  $X$ is homeomorphic to $\omega^\alpha  +1$. If { $|X^{(\alpha)}|= n \in \omega $}, then $X$ has a finite open cover $\mathcal U$ such that each $V\in \mathcal U$ meets 
	$X^{(\alpha)}$ at a single point and members of $\mathcal U$ are pairwise disjoint.  Therefore   $X$ is homeomorphic to  the sum of $ (\omega^\alpha +1)$ taken $n$-times and consequently $X$ is homeomorphic to $\omega^\alpha \cdot n +1$.

	Recall that B. Knaster and K. Urbanik \cite{ku} proved  that any  countable metric scattered  space is homeomorphic to a subset of a countable ordinal. Therefore, it has a metric scattered compactification, which is a closed subset of some $\beta +1$, where $\beta < \omega_1$.  A proof that any  countable metric scattered  space has a countable 
	metric compactification, which is  scattered,  was also presented in \cite[p. 25]{kur}. For compact $X$, the proof by S. Mazurkiewicz and W.
	Sierpi\'nski  indicates the smallest ordinal number in which  $X$ can be  embedded. For any countable   metric scattered space a similar indication is not clearly described. So, let us describe the ordinals, which are essential for the   induction proof of the Knaster-Urbanik theorem. When $\alpha$ is a countable  ordinal, let $E(\alpha)$ be the least ordinal such that any countable metric  scattered space with the one-point $\alpha$-derivative  	can be 
embedded  into 	$E(\alpha)$. Thus 	$E(0)=1$ and 	$E(1)=\omega^2+1$, and also $E(m)=\omega^{2m}+1$ for any $m\in \omega$.  In fact, we have the following version of Gillam Lemma 8, see  \cite{gil}.

\begin{pro} \label{p9} If $m\in \omega$, then $E(m)\leq \omega^{2m}+1$.  \end{pro}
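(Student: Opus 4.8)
The plan is to prove the bound $E(m)\le\omega^{2m}+1$ by induction on $m\in\omega$, the base cases $E(0)=1$ and $E(1)=\omega^2+1$ being already noted above (a countable metric scattered space with one‑point first derivative is a convergent sequence together with a countable discrete set attached, hence embeds into the interval $[0,\omega^2]$ whose points below $\omega^2$ form $\omega$ many convergent sequences). So fix $m\ge 1$, assume $E(k)\le\omega^{2k}+1$ for all $k\le m$, and let $X$ be a countable metric scattered space with $X^{(m+1)}=\{g\}$ a single point.

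First I would peel off the top point: by Lemma~\ref{ll} and Theorem~\ref{eoc}, $X\setminus\{g\}$ is the union of countably many pairwise disjoint closed‑open subsets $U_0,U_1,\dots$, each of which has empty $(m+1)$‑derivative, i.e.\ $U_n^{(m+1)}=\emptyset$, so $N(U_n)\le m+1$ and in fact each $U_n$ decomposes further (using Theorem~\ref{eoc} on $U_n\setminus U_n^{(m)}$) into finitely or countably many closed‑open pieces each carrying a one‑point $k$‑derivative for various $k\le m$. By the induction hypothesis every such piece embeds into a closed subspace of $\omega^{2k}+1\le\omega^{2m}+1$. The key point is that because $g$ is the \emph{only} point of the top derivative, any neighbourhood of $g$ meets all but finitely many of the $U_n$, so we may assume each $U_n$ itself has a one‑point $m$‑derivative $\{h_n\}$ with $h_n\to g$, and $U_n$ embeds as a closed subspace of $\omega^{2m}+1$ with $h_n$ corresponding to the top point $\omega^{2m}$.

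Next I would assemble these embeddings. The idea is exactly the ordinal‑arithmetic trick used in the Mazurkiewicz--Sierpi\'nski argument above: a countable sum $\bigoplus_n Z_n$ of closed subspaces $Z_n\subseteq\omega^{2m}+1$, each with its top point converging to a single limit point, embeds into $\omega^{2m}\cdot\omega+1=\omega^{2m+1}+1$, and hence a fortiori into $\omega^{2(m+1)}+1=\omega^{2m+2}+1$, which is what is claimed. Concretely one places the copy of $Z_n$ (minus its top point) inside the interval $(\omega^{2m}\cdot n,\ \omega^{2m}\cdot(n+1))$, which is homeomorphic to $\omega^{2m}$ by the remarks of Section~\ref{s2}, and sends $g$ to $\omega^{2m+1}\le\omega^{2m+2}$; one must check this is a homeomorphic embedding, which amounts to checking convergence: a sequence in $X$ converges to $g$ iff it eventually leaves every finite union of the $U_n$, and that is precisely how convergence to $\omega^{2m+1}$ works in the ordinal. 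Since $X$ embeds as a closed subset this also re‑proves $E(m+1)\le\omega^{2m+2}+1$ with room to spare (the slack between $\omega^{2m+1}+1$ and $\omega^{2m+2}+1$ is what absorbs the case where the $m$‑derivatives of the $U_n$ are themselves countably infinite rather than single points).

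The main obstacle, I expect, is the bookkeeping in the decomposition step: one has to be careful that after applying Theorem~\ref{eoc} repeatedly the pieces really do have one‑point lower derivatives (or can be further split until they do), and that the induction hypothesis is applied to the right ordinal $2k$ with $k$ strictly controlled by the Cantor--Bendixson rank of the piece. A secondary subtlety is verifying that the constructed map is an embedding at the single gluing point $g$ — isolated‑point and lower‑rank behaviour is automatic from the disjoint closed‑open decomposition, but continuity and openness at $g$ must be argued from the "all but finitely many $U_n$" property. Once those two points are pinned down the ordinal arithmetic is routine and gives the stated inequality.
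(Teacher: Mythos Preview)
Your proposal is correct and follows essentially the same inductive scheme as the paper: decompose below the top point into closed-open pieces with one-point lower derivative, apply the inductive bound to each piece, and line the pieces up inside an ordinal interval. The only organizational difference is that the paper makes the two levels of countability explicit from the start---it fixes a decreasing clopen base $\{U_n\}$ at $g$, then uses Theorem~\ref{eoc} to split each ring $U_n\setminus U_{n+1}$ into pieces $Y_{n,k}$ with $Y_{n,k}^{(m-1)}=\{g_{n,k}\}$, embedding each ring into $\omega^{2m-1}$ and then the stack of rings into $\omega^{2m}+1$---whereas you first posit pieces $U_n$ with a one-point $m$-derivative and then invoke the ``slack'' between $\omega^{2m+1}+1$ and $\omega^{2m+2}+1$ to handle the case where that assumption fails; the paper's arrangement avoids that detour and also makes your ``all but finitely many $U_n$'' convergence claim automatic.
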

\begin{proof} Suppose $X$ is a countable metric space such that  $X^{(1)} =\{ g \}$. Let $\{U_n: n\in \omega\}$ be a decreasing base at the point $g$. Then any one-to-one function $f: X \to \omega^2+1$ such that $f(g)=\omega^2$ and any image $f[U_n\setminus  U_{n+1}]$ is contained in the interval $(\omega \cdot n, \omega\cdot (n+1))$ has to be an embedding of $X$ into $\omega^2+1$. Therefore $E(1)\leq \omega^{2}+1$.

Assume  that if  $Y$ is a countable metric space such that  $Y^{(m-1)} =\{ h \}$,  then  $\omega^{2m-2}+1$ contains a homeomorphic copy of $Y$ such that the point $h$ corresponds to the ordinal $\omega^{2m-2}$. Suppose $X$ is a countable metric space such that  $X^{(m)} =\{ g \}$. 
Choose a family $\{U_n: n\in \omega\}$ of closed-open sets  such that it is a decreasing base at the point $g$ and  each set $U_n\setminus U_{n+1}$ intersects $X^{(m-1)}$. By Theorem \ref{eoc}, each $U_n\setminus U_{n+1}$  is an union of pairwise disjoint closed-open sets $Y_{n,k}$ such that $$Y_{n,k}\cap X^{(m-1)}=\{g_{n,k}\}.$$ Based on inductive assumptions, there exist embeddings $$f_{n,k}: Y_{n,k}\to \omega^{2m-2}+1$$ such that each point $g_{n,k}$ corresponds to 
the ordinal $\omega^{2m-2}$.  Line up images $ f_{n,k} [ Y_{n,k}]$ such that  $ f_{n,i} [ Y_{n,i}]$  followed by  $ f_{n,i+1} [ Y_{n,i+1}]$,  for $i\in \omega$. We get   embeddings $f_n: U_n \setminus U_{n+1} \to \omega^{2m-1}. $   Again, line up images $ f_{n} [ U_n \setminus U_{n+1}]$ and ordinals $\{\omega^{2m-1}\cdot k: 0<k\in \omega \}$ such that 
 $ f_{0} [ U_0 \setminus U_{1}]$ followed by  $\{\omega^{2m-1}\}$ followed by $ f_{1} [  U_1 \setminus U_{2}]$  followed by  $\{\omega^{2m-1}\cdot 2 \}$ followed by $ f_{2} [  U_2 \setminus U_{3}]$ and so on. Except for $n=0,$ we have  $$ f_{n} [ U_n \setminus U_{n+1}]\subset [\omega^{2m-1}\cdot n +1, \omega^{2m-1}\cdot (n+1)] = (\omega^{2m-1}\cdot n , \omega^{2m-1}\cdot (n+1)+1).$$ This means that  images $ f_{n} [ U_n \setminus U_{n+1}]$ are contained in pairwise disjoint closed-open intervals. So, we get  the embedding $f: X  \to \omega^{2m}+1,$ as far as we put $f(g)=\omega^{2m}.$ Therefore $E(m)\leq \omega^{2m}+1$.
\end{proof}

\begin{cor} \label{c9} If $m\in \omega$, then $E(m)= \omega^{2m}+1$.  \end{cor}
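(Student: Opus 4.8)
The plan is to derive the equality $E(m)=\omega^{2m}+1$ by combining Proposition \ref{p9}, which already gives the upper bound $E(m)\leq\omega^{2m}+1$, with a matching lower bound $E(m)\geq\omega^{2m}+1$. Since $E(m)$ is by definition the least ordinal into which \emph{every} countable metric scattered space with one-point $m$-derivative embeds, it suffices to exhibit one such space that embeds into no ordinal strictly smaller than $\omega^{2m}+1$. The natural candidate is a countable metric scattered space $X$ with $X^{(m)}=\{g\}$ whose Cantor--Bendixson rank behaves so that the embedded copy must already contain (a homeomorphic copy of) $\omega^{2m}+1$ as a closed subspace; but even more directly, one can produce a space that is \emph{not} homeomorphic to any subspace of any $\gamma<\omega^{2m}+1$.

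First I would pin down the relevant invariant. By the Proposition in Section \ref{s2}, $N(\omega^{2m}+1)=2m+1$, and more generally any ordinal $\gamma<\omega^{2m}+1$ either has rank $N(\gamma)\leq 2m$, or has rank $2m+1$ but with $\gamma^{(2m)}$ a \emph{finite} set of size strictly less than the size forced below. So the lower bound will follow once I construct, for each $m$, a countable metric scattered space $X_m$ with $X_m^{(m)}$ a single point but with the property that in any order-embedding into an ordinal, the image has CB-rank at least $2m+1$ and, at the critical level, the derivative cannot be squeezed below $\omega^{2m}$. Concretely I would take $X_m$ built inductively as in Proposition \ref{p9}'s proof but ``saturated'': at each of the $m$ stages one glues together a sequence of the previous-stage spaces along a convergent sequence of their top points, so that passing from level $j$ to level $j+1$ genuinely multiplies the local ordinal weight by $\omega^2$ rather than merely by $\omega$. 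The key computation is that such an $X_m$ satisfies: if $f\colon X_m\to\gamma+1$ is an embedding onto a closed (compact) subspace, then by the Mazurkiewicz--Sierpi\'nski classification recalled in Section \ref{s4} the compactification of the image is homeomorphic to some $\omega^\alpha\cdot n+1$ with $\alpha\geq 2m$, whence $\gamma\geq\omega^{2m}$ and so $E(m)\geq\omega^{2m}+1$.

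The main obstacle I anticipate is the careful bookkeeping of the lower bound: one must verify that the ``doubling'' at each stage really is necessary, i.e. that no cleverer embedding into a shorter ordinal exists. The cleanest way around this is to argue contrapositively using ranks: show that if $X$ embeds into $\gamma$ with $N(\gamma)=k$ then $N(X)\leq k$, and then show that the saturated space $X_m$ has the stronger property that its one-point $m$-derivative ``costs'' two rank levels per stage because each stage contributes a copy of $\omega\cdot\omega$ worth of structure below the distinguished point (the stage-$1$ case $E(1)=\omega^2+1$, already noted in the text, is the base case of exactly this phenomenon). Iterating the stage-$1$ analysis $m$ times gives the bound $E(m)\geq\omega^{2m}+1$, and together with Proposition \ref{p9} this yields $E(m)=\omega^{2m}+1$.
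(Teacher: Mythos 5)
Your overall strategy is the same as the paper's: keep the upper bound from Proposition \ref{p9} and exhibit, for each $m$, a witness space with one-point $m$-derivative that cannot be embedded below $\omega^{2m}$, built by iterating the $E(1)=\omega^2+1$ phenomenon. However, there are two genuine gaps in the execution. First, the witness construction is underspecified at the decisive point. The paper's space $X(m)$ has a distinguished point $g$ with a decreasing base $\{U_n\}$ such that \emph{each} difference $U_n\setminus U_{n+1}$ is a union of \emph{infinitely many} disjoint copies of $X(m-1)$ (the $I(\cdot)$-type construction, with $X(1)=\omega^2+1\setminus\{\omega\cdot k: k\in\omega\}$ as base case). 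Your description --- ``glues together a sequence of the previous-stage spaces along a convergent sequence of their top points'' --- reads as one copy per annulus, i.e.\ the $G(\cdot)$-type construction; that space embeds into $\omega^{2m-1}+1$ and does not witness the lower bound. The extra factor of $\omega$ per stage comes precisely from packing infinitely many copies into each annulus, and this must be stated explicitly.

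Second, and more seriously, the tool you propose for verifying the lower bound cannot work. The rank monotonicity ``if $X$ embeds into $\gamma$ then $N(X)\leq N(\gamma)$'' only yields $N(\gamma)\geq N(X(m))=m+1$, i.e.\ $\gamma\geq\omega^m$, which is far short of $\omega^{2m}$. The whole content of the corollary is that the Cantor--Bendixson rank of the witness ($m+1$) is strictly smaller than the rank of the least ordinal containing it ($2m+1$); saying that each stage ``costs two rank levels'' is not a statement about the derivatives of $X(m)$ itself and cannot be extracted from rank comparison --- it is exactly the claim to be proved. Likewise, invoking the Mazurkiewicz--Sierpi\'nski classification of the closure of the image tells you the closure is some $\omega^\alpha\cdot n+1$ but gives no reason why $\alpha\geq 2m$. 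The paper closes this gap by a direct induction inside an arbitrary embedding $f:X(m)\to\gamma+1$ with $f(g)=\beta=\sup f[X(m)]$: setting $A_n$ to be the image of (an infinite union of copies of $X(m-1)$ inside) the $n$-th annulus and $\beta_n=\sup A_n$, the induction hypothesis that no copy of $X(m-1)$ fits below $\omega^{2m-2}$ forces $\beta_n\geq\omega^{2m-1}\cdot(n+1)$, whence $\beta\geq\lim_n\beta_n\geq\omega^{2m}$. Some such supremum-climbing argument on the image, rather than a rank count on the domain, is unavoidable here.
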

\begin{proof} Let $X(1)=\omega^2+1 \setminus \{\omega\cdot k: k\in \omega \}.$ So $X(1)^{(1)} = \{\omega^2\}$. Without loss of generality, we can assume that  $f:X(1) \to \omega^2+1$  is an embedding such that  $$\beta =f(\omega^2) = \sup f[X(1)].$$ Put $\beta_1 = \sup f[ \omega]$ and  $f[X(1)]\cap [0, \beta_1]= A_1$.  Since $A_1$ is infinite, we have  $\beta > \beta_1 \geq \omega$. 
Inductively assume that ordinals $\beta_1, \beta_2, \dots, \beta_{n-1}$ and discrete infinite subspaces   $A_1, A_2, \dots, A_{n-1}\subset f[X(1)]$ are already defined and $\beta_k=\sup A_k \geq \omega \cdot k$, for  $0<k<n$. Choose an infinite and discrete subspace $$A_n \subseteq f[X(1)] \cap (\beta_{n-1}, \beta)$$
 and put $ \beta_n= \sup A_n $.   Assuming inductively that $ \beta_{n-1} \geq \omega \cdot (n-1)$ we get $ \beta_n \geq \omega \cdot n$. This implies  $\omega^2\leq \lim_{n\to \infty} \beta_n \leq \beta $. Therefore $E(1) = \omega^2+1$. 

Let $m>1$. Assume that  the space $X(m-1)\subseteq \omega^{2m-2}+1$ is already defined such that   $X(m-1)^{(m-1)} = \{\omega^{2m-2}\}$ and $X(m-1)$ can not be embedded 
into $\beta < \omega^{2m-2}.$ Take a countable infinite family $\mathcal S$  consisting of copies  $ X(m-1)$. Let  $X(m)= \bigcup \mathcal S \cup \{g\}$ be equipped with the topology, where $\bigcup \mathcal S$ inherits the sum topology and the point $g$ has a decreasing base of neighborhoods $\{U_n: n \in \omega\}$ such that each $U_n \setminus U_{n+1}$ is the union of an infinite many copies of  $X(m-1)$. By the definition, $X(m)$ can be embedded into  $\omega^{2m}+1$ such that the point $g$ corresponds to the ordinal $\omega^{2m}.$
Without loss of generality, we can assume that  $f:X(m) \to \omega^{2m}+1$  is an embedding such that  $\beta =f(g) = \sup f[X(m)].$ Put $\beta_0 = \sup f[U_0\setminus U_1]$ and  
$f[X(m)]\cap [0, \beta_0]= A_0$.  By the induction assumptions,  we get  $\beta > \beta_0 \geq \omega^{2m-1}$. 
Inductively assume that ordinals $\beta_0, \beta_1, \dots, \beta_{n-1}$ and  subspaces   $A_0, A_1, \dots, A_{n-1}\subset f[X(m)]$ are already defined such that 
$$\beta > \beta_k=\sup A_k \geq \omega^{2m-1} \cdot (k+1),$$ for each $k<n$. Let $A_n \subset f[X(m)] \cap (\beta_{n-1}, \beta)$ be an infinite union of copies of $X(m-1)$ such that $\beta > \sup A_n=\beta_n$. Since $ \beta_{n-1} \geq \omega^{2m-1} \cdot n$ we get  $ \beta_n \geq \omega^{2m-1} \cdot (n+1)$. This implies  $\omega^{2m}\leq \lim_{n\to \infty} \beta_n \leq \beta $. Therefore $E(m) = \omega^{2m}+1$.
\end{proof} 

Defined in the above proof spaces $X(m)$ can be added the same way as ordinals, except that the result of such addition must be equipped with the  inherited topology. 
However, such an extension rules seem to be a good topic for future research. 
\begin{pro}\label{t2} Let $\alpha = \gamma +m$, where $m\in \omega$ and $\gamma < \omega_1$ is a limit ordinal. Then  $E(\alpha)=\omega^{\gamma + 2m+1}+1$.
 \end{pro}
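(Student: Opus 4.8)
\emph{The plan.} Write $e(Z)$ for the least ordinal $\delta$ with $Z$ homeomorphic to a subspace of $\delta=[0,\delta)$; then $E(\alpha)=\sup\{e(Z):Z^{(\alpha)}\text{ is one point}\}$, so it suffices to show (i) $e(Z)\le\omega^{\gamma+2m+1}+1$ whenever $Z^{(\gamma+m)}$ is one point, and (ii) that some such $Z$ has $e(Z)=\omega^{\gamma+2m+1}+1$. I would argue by transfinite induction on the limit ordinal $\gamma$, and for each $\gamma$ by induction on $m$. When $m\ge1$ the step is word for word the argument of Proposition~\ref{p9} and Corollary~\ref{c9}, with ``$\gamma+$'' inserted in front of every exponent: choose a decreasing closed-open base $\{U_n\}$ at the one point $g$ so that each $U_n\setminus U_{n+1}$ meets $Z^{(\gamma+m-1)}$, split it by Theorem~\ref{eoc} into pairwise disjoint closed-open pieces with one-point $(\gamma+m-1)$-derivative, embed these by the $m-1$ case into $\omega^{\gamma+2m-1}+1$, and line everything up (inside a shell this uses room $\omega^{\gamma+2m}$, the $\omega$ shells then use $\omega^{\gamma+2m+1}$, and $g$ goes on top); for (ii) take the recursively built space $X(\gamma+m)$ carrying $\omega$ many copies of $X(\gamma+m-1)$ near $g$ and run the ordinal-counting argument of Corollary~\ref{c9}. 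So the real work is the base case $m=0$, i.e.\ $E(\gamma)=\omega^{\gamma+1}+1$ for a limit ordinal $\gamma$.

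\emph{The bound $E(\gamma)\le\omega^{\gamma+1}+1$.} The key lemma is: \emph{if $\gamma$ is a limit ordinal and $Y^{(\gamma)}=\emptyset$, then $Y$ embeds into $\omega^\gamma$.} Every $y\in Y$ has some rank $\rho<\gamma$, hence a closed-open neighbourhood of rank $\le\rho+1<\gamma$; by Theorem~\ref{eoc} (or Theorem~\ref{tel}) these refine to a pairwise disjoint closed-open cover $\{W_i\}$ with $N(W_i)<\gamma$, and each $W_i$ embeds into $\omega^{e_i}+1$ for some $e_i<\gamma$ (Corollary~\ref{c9} and Proposition~\ref{t2} for ordinals below $\gamma$); listing the $W_i$ with the $e_i$ non-decreasing and lining them up gives $Y\hookrightarrow\sum_i(\omega^{e_i}+1)\le\omega^\gamma$, since the $e_i$ lie below the limit $\gamma$. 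Granting this, let $Z^{(\gamma)}=\{g\}$, fix a decreasing closed-open base $\{U_n\}$ at $g$ with $\bigcap_nU_n=\{g\}$, and put $S_n=U_n\setminus U_{n+1}$; as $S_n$ is closed-open, $S_n^{(\gamma)}=S_n\cap Z^{(\gamma)}=\emptyset$, so $S_n\hookrightarrow\omega^\gamma$. Placing the copies of $S_0,S_1,\dots$ in the consecutive intervals $[\omega^\gamma\!\cdot n,\ \omega^\gamma\!\cdot(n+1))$ of $\omega^{\gamma+1}=\omega^\gamma\!\cdot\omega$ and sending $g$ to $\omega^{\gamma+1}$ yields an embedding $Z\hookrightarrow\omega^{\gamma+1}+1$.

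\emph{The bound $E(\gamma)\ge\omega^{\gamma+1}+1$.} Here one needs a countable metric scattered $Z$ with $Z^{(\gamma)}=\{g\}$ that does not embed into $\omega^{\gamma+1}$. I would take $Z=\{g\}\cup\bigoplus_nS_n$, the $S_n$ copies of a fixed space $S$ with $N(S)=\gamma$ and $e(S)=\omega^\gamma$ (for instance $S=\bigoplus_kX(\gamma_k)$ for witnesses $X(\gamma_k)$ of $E(\gamma_k)=\omega^{e_k}+1$ with $\gamma_k,e_k\nearrow\gamma$, where $e(S)=\omega^\gamma$ is checked directly), topologised so that $\{g\}\cup\bigoplus_{m\ge n}S_m$ is a decreasing closed-open base at $g$; then $Z^{(\gamma)}=\{g\}$. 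Suppose $f\colon Z\hookrightarrow\delta$ with $\delta\le\omega^{\gamma+1}$. Since $g\in Z^{(\beta)}$ for every $\beta<\gamma$, the ordinal $f(g)$ is divisible by $\omega^\beta$ for every $\beta<\gamma$, hence by $\omega^\gamma$, so $f(g)=\omega^\gamma\!\cdot j$ with $1\le j<\omega$. Restricting $f$ to a closed-open neighbourhood of $g$ contained in $f^{-1}[[0,f(g)]]$ (which still carries infinitely many full copies of $S$) we may assume $f(g)=\max f[Z]$. Then the closed-open sets $f[Z\setminus U_n]\cong S_0\oplus\dots\oplus S_{n-1}$ increase with supremum $f(g)$, and since $n$ disjoint copies of $S$ cannot be fitted below $\omega^\gamma\!\cdot n$, an induction produces $\beta_1<\beta_2<\dots<f(g)$ with $\beta_n\ge\omega^\gamma\!\cdot n$; hence $\omega^{\gamma+1}\le\sup_n\beta_n\le f(g)=\omega^\gamma\!\cdot j<\omega^{\gamma+1}$, a contradiction. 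Thus $Z\not\hookrightarrow\delta$ for $\delta\le\omega^{\gamma+1}$, so $e(Z)=\omega^{\gamma+1}+1$, and the inductive step finishes the proof.

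\emph{The main obstacle.} The delicate part is the lower bound at a limit level. In Corollary~\ref{c9} the pieces around the top point have finite rank and stack cleanly; here each shell has rank reaching $\gamma$, and disjoint closed-open subsets of an ordinal need not have disjoint convex hulls, so one cannot just add ``lengths.'' What rescues the argument is, first, that $\omega^\gamma$ is additively indecomposable, so any block of $e$-value $\omega^\gamma$ lying above $\beta_{n-1}$ already overshoots $\beta_{n-1}+\omega^\gamma$; and second, the reduction to $f(g)=\max f[Z]$, after which the co-neighbourhoods $Z\setminus U_n$ of $g$ (finite unions of shells) push $f(g)$ upward exactly as in the counting scheme of Corollary~\ref{c9}. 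Choosing the shell space $S$ so that $n$ disjoint copies genuinely require $\omega^\gamma\cdot n$, and checking $e(S)=\omega^\gamma$, is where most of the care goes; everything else is bookkeeping with ordinal arithmetic in the style of Section~\ref{s2} and of the proof of Corollary~\ref{c9}.
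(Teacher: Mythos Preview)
Your argument is correct and, for the lower bound at the limit level, follows essentially the same route as the paper: the paper's witness $X(\omega)$---with shells $U_n\setminus U_{n+1}$ each an infinite sum of copies of $X(k)$ for cofinally many $k$---is exactly your $Z$ with shells $S=\bigoplus_k X(\gamma_k)$, and the inductive push $\beta_n\ge\omega^\gamma\cdot n$ is the counting scheme of Corollary~\ref{c9} that both you and the paper invoke. Your divisibility observation $f(g)=\omega^\gamma\cdot j$ and the explicit reduction to $f(g)=\max f[Z]$ are tidier than the paper's ``without loss of generality'', but the idea is the same.

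Where you genuinely add to the paper is the upper bound. The paper's proof of Proposition~\ref{t2} only exhibits the witness and shows any embedding forces $f(g)\ge\omega^{\gamma+1}$; it never verifies that an arbitrary countable metric $Z$ with $Z^{(\gamma)}$ a singleton actually embeds into $\omega^{\gamma+1}+1$ (Proposition~\ref{p9} handles only finite $m$). Your key lemma---that $Y^{(\gamma)}=\emptyset$ implies $Y\hookrightarrow\omega^\gamma$, via a disjoint closed-open refinement into pieces of rank $<\gamma$ and additive indecomposability of $\omega^\gamma$---supplies exactly this missing half, and the shell-by-shell embedding of $Z$ into $\omega^{\gamma+1}+1$ then follows. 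So your write-up is the more complete of the two.
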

\begin{proof} Let the space $X(\omega)$ be such that   $X(\omega)^{(\omega)}=\{g\}. $ Moreover, the point $g$ has a decreasing base of neighborhoods $\{U_n: n \in \omega\}$ such that each $U_n \setminus U_{n+1}$ is an infinite sum of copies of $X(k)$, defined in the proof of Corollary \ref{c9},   where $k $ runs by infinitely many natural numbers. 
Without loss of generality, we can assume that  $f:X(\omega) \to \omega^{\omega +1}+1$  is an embedding such that  $\beta =f(g) = \sup f[X(\omega)].$ Put $\beta_0 = \sup f[U_0\setminus U_1]$ and  $f[X(\omega)]\cap [0, \beta_0]= A_0$.  By the induction assumptions,  we have  $\beta > \beta_0 \geq \omega^{\omega}$. 
Inductively assume that ordinals $\beta_0, \beta_1, \dots, \beta_{n-1}$ and  subspaces   $A_0, A_1, \dots, A_{n-1}\subset f[X(\omega)]$ are already defined and $$\beta > \beta_{n-1}=\sup A_{n-1} \geq \omega^{\omega} \cdot n.$$ Let $A_n \subset f[X(\omega)] \cap (\beta_{n-1}, \beta)$ be an infinite sum of copies of $X(k),$ where $k$ runs by infinitely many natural numbers. We get  $$\beta > \sup A_n=\beta_n> \beta_{n-1} \mbox{ and } \, \beta_n \geq \omega^{\omega} \cdot (n+1).$$ Assuming inductively that $ \beta_{n-1} \geq \omega^{\omega} \cdot (n-1)$ we get  $ \beta_n \geq \omega^{\omega} \cdot n$.  Therefore $\omega^{\omega+1}\leq \lim_{n\to \infty} \beta_n \leq \beta $ and $E(\omega) = \omega^{\omega +1}+1$.
Similarly, one can prove that $E(\gamma) = \omega^{\gamma +1}+1$ for each limit ordinal $\gamma < \omega_1$. And also in analogy to the proof of Corollary \ref{c9}, one can  get 
$E(\gamma+ m ) = \omega^{\gamma+2m +1}+1$, whenever $m\in \omega$ and $\gamma < \omega_1$ is a limit ordinal.
 \end{proof} 

\begin{pro}\label{p8} If $0<\alpha < \omega_1$, then any  countable metric  space with nonempty $\alpha$-derivative contains a homeomorphic copy of $\omega^\alpha +1$.
			\end{pro}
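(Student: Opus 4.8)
The plan is to prove the statement by transfinite induction on $\alpha$ with $0<\alpha<\omega_1$, each time producing the copy of $\omega^\alpha+1$ with a prescribed point of $X^{(\alpha)}$ as its top point. The base case $\alpha=1$ is immediate: if $X^{(1)}\neq\emptyset$, pick a non-isolated point $g$; since a metric space is $T_1$, every neighborhood of $g$ contains infinitely many points of $X$, so one extracts a sequence of distinct points converging to $g$, and that sequence together with $g$ is a copy of $\omega+1=\omega^1+1$.

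For the inductive step fix $g\in X^{(\alpha)}$ and choose ordinals $\beta_0\leq\beta_1\leq\cdots$ with $1\leq\beta_n<\alpha$ and $\sup_n(\beta_n+1)=\alpha$: take $\beta_n=\beta$ for all $n$ when $\alpha=\beta+1$, and $\beta_n\nearrow\alpha$ when $\alpha$ is a limit ordinal. In both cases $g\in X^{(\beta_n+1)}$, so $g$ is not isolated in $X^{(\beta_n)}$, and hence (by $T_1$-ness) $X^{(\beta_n)}\cap V$ is infinite for every neighborhood $V$ of $g$. Using the metric $\varrho$ I would pick points $h_n\in X^{(\beta_n)}\setminus\{g\}$ with $\varrho(h_{n+1},g)<\frac{1}{3}\varrho(h_n,g)$ and put $W_n=B(h_n,\frac{1}{3}\varrho(h_n,g))$; then the balls $W_n$ are pairwise disjoint, each misses $g$, and every neighborhood of $g$ contains all but finitely many of them. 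Since $W_n$ is open in $X$ one has $W_n^{(\beta_n)}=W_n\cap X^{(\beta_n)}\ni h_n$, so the subspace $W_n$ has nonempty $\beta_n$-derivative; applying the induction hypothesis with $\beta_n<\alpha$ gives a subspace $C_n\subseteq W_n$ homeomorphic to $\omega^{\beta_n}+1$.

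It then remains to recognize the homeomorphism type of $Z=\{g\}\cup\bigcup_n C_n$ in the subspace topology. Because the $W_n$ are disjoint and open, each $C_n$ is relatively open in $\bigcup_m C_m$, so $Z\setminus\{g\}$ is the topological sum $\bigoplus_n C_n$; because the $W_n$ converge to $g$, the neighborhoods of $g$ in $Z$ are exactly the sets that contain $g$ and all but finitely many of the $C_n$. Thus $Z$ is homeomorphic to the order sum $\bigl(\sum_{n\in\omega}(\omega^{\beta_n}+1)\bigr)+1$. Finally one computes this ordinal: if $\beta_n=\beta$ it is $\omega^\beta\cdot\omega+1=\omega^{\beta+1}+1=\omega^\alpha+1$, and if $\beta_n\nearrow\alpha$ the absorption identities $1+\omega^{\beta_k}=\omega^{\beta_k}$ and $\omega^{\beta_j}+\omega^{\beta_k}=\omega^{\beta_k}$ (for $\beta_j<\beta_k$) collapse the $n$-th partial sum to $\omega^{\beta_n}+1$, whose supremum over $n$ is $\sup_n\omega^{\beta_n}=\omega^{\sup_n\beta_n}=\omega^\alpha$; in either case $Z\cong\omega^\alpha+1$, which closes the induction.

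The metric bookkeeping with the balls $W_n$ and the ordinal-arithmetic absorption identities are routine, in the spirit of Section \ref{s2}. The step I expect to be the crux is the one making the induction hypothesis usable inside each $W_n$: it relies on the identity $U^{(\gamma)}=U\cap X^{(\gamma)}$ for open $U\subseteq X$ and every ordinal $\gamma$, proved by an easy induction on $\gamma$ (isolatedness in $X^{(\gamma)}$ is a local property), together with the fact that the $W_n$ can simultaneously be made disjoint, shrinking to $g$, and still centered at points of arbitrarily high derivative $X^{(\beta_n)}$.
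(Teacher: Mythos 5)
Your proof is correct and follows essentially the same route as the paper's: induct on $\alpha$, surround $g\in X^{(\alpha)}$ by disjoint open sets shrinking to $g$, extract a copy of $\omega^{\beta_n}+1$ from each with $\sum_{n\in\omega}\omega^{\beta_n}=\omega^\alpha$, and glue. Your version is in fact slightly more careful than the paper's, since by centering the balls $W_n$ at points $h_n\in X^{(\beta_n)}$ and using $W_n^{(\beta_n)}=W_n\cap X^{(\beta_n)}$ you justify explicitly why the chosen pieces have derivative rank at least $\beta_n$ --- a point the paper's proof leaves implicit when it ``chooses copies of $\omega^{\beta_n}+1\subseteq V_n$''.
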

			\begin{proof} Let $X$ be a countable metric  space. Without   loss of generality, assume that  $X^{(\alpha)}=\{g\}.$
			If $\alpha =1$, then $X$ contains a convergent sequence, which is homeomorphic to   $\omega +1$. Suppose, that the thesis holds for all $\beta < \alpha$. Fix a metric $\varrho$ on $X$. Choose nonempty closed-open sets $V_n \subseteq X\setminus \{g\}$ such that $V_n \subseteq B(g, \frac{1}{n})$. By the induction assumptions each $V_n$ contains a homeomorphic copy of $\omega^{\beta_n}+1$, where $\beta_n < \alpha$. So, we choose  copies of $\omega^{\beta_n}+1 \subseteq V_n$ such that $\omega^\alpha = \sum_{n\in \omega}\omega^{\beta_n}.$ The sum of these copies plus point $g$ gives a subspace homeomorphic to $ \omega^\alpha +1.$    			
	\end{proof}
	
	\section{More on local embeddable properties} \label{s5}
	
	Let $\mathcal A$ be the poset consisting of dimensional types of countable metric spaces $X$ with  $1 < N(X) \in \omega$. Many properties of $ ({\mathcal P}(\Bbb Q)/\!=_E ,\leq_d)$ can be reduced to $\mathcal A$,   as it is observed in \cite[p. 69 - 81]{gil}. 
Let us discuss another local embeddable  invariants, which are not mentioned in the paper \cite{gil}. Assume that $X$ is a metric scattered space such that $X^{(m)}=\{g\}$, where $0<m\in \omega$.  We  say that $X$ has $(m,1)$-\textit{stable dimensional type} if no $Y\subseteq X$  has  smaller dimensional type than $X$, whenever  $X \setminus Y$ is a closed-open set and $g\in Y$. There exist exactly two $(1,1)$-stable dimensional types, i.e. the dimensional type of the convergent sequence $G= \omega +1$ or  the dimensional type of the subspace $I= \omega^2+1 \setminus \{\omega, \omega \cdot 2, \omega \cdot  3, \ldots\}$. So, $I$  is a space with the single cluster point which has a base of open neighborhoods $\{U_n: n\in \omega\}$ such that each difference   $U_n \setminus U_{n+1}$ is  infinite and discrete. 

 We  leave the readers check that there exist exactly five $(2,1)$-stable dimensional types. 
These are dimension types of  following spaces: 
\begin{itemize}
	\item[] $\omega^2+1$; \item[]  $\omega^3+1 \setminus \{\omega^2, \omega^2 \cdot 2, \omega^2 \cdot  3, \ldots\}$; \item[] $\sum_\omega I +1\subset \omega^3+1$, where the subspace is  established as a  sequence of $I$ followed by a copy of $I$ (infinitely many times) and with $1$ at the end;  \item[] $\sum_\omega I \oplus\sum_\omega I\oplus \sum_\omega I\oplus \ldots +1\subset \omega^4+1$, where the subspace is established as a  sequence of $\sum_\omega I$ followed by a copy of $\sum_\omega I$ (infinitely many times) with $1$ at the end and with the ordinals $\omega^3, \omega^3 \cdot 2, \omega^3 \cdot 3, \ldots $ thrown out;    \item[] $\sum_\omega (\omega^2 \oplus I ) +1 \subset \omega^3+1$, where  operation  $\sum_\omega (...) +1 $ is used  as above and $\omega^2 \oplus I\subset \omega^2 \cdot 2+1$  is the subspace of established as a copy of $\omega^2$  followed by a copy of $I$ with $\omega^2$  thrown out.   
\end{itemize}

If   $0<n\in \omega$ and $X \in \mathcal A,$ then we can prove the following. 
\begin{thm}   There exist finitely many $(n,1)$-stable dimensional types. Each  $X\in \mathcal A$ is a sum of closed-open subspaces  with  $(k,1)$-stable dimensional types, where $0<k<N(X)$.     \end{thm}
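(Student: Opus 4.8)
The plan is to proceed by induction on $n = N(X)$, proving both assertions simultaneously. The base case $n=2$ is handled by the already-established classification of $(1,1)$-stable dimensional types (there are exactly two, namely $G=\omega+1$ and $I$), together with the observation that any $X\in\mathcal A$ with $N(X)=2$ decomposes, via Theorem \ref{eoc}, into countably many closed-open pieces, each of $\alpha$-derivative height $1$; the finiteness of $(2,1)$-stable types is the worked list in the five bullet points above. For the inductive step, suppose the claim holds for all ranks below $N(X)$. First I would show the decomposition statement: given $X\in\mathcal A$ with $N(X)=n$, pick any point $g$ of $X^{(n-1)}$ (recall $|X^{(n-1)}|$ is finite, so we may as well assume $X^{(n-1)}=\{g\}$ by splitting off a finite disjoint open cover as in the Mazurkiewicz-Sierpi\'nski argument in Section \ref{s4}), take a decreasing closed-open base $\{U_k:k\in\omega\}$ at $g$, and apply Theorem \ref{eoc} to each annulus $U_k\setminus U_{k+1}$ to split it into pairwise disjoint closed-open subspaces of strictly lower rank. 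By the induction hypothesis each such subspace is itself a sum of closed-open pieces with $(j,1)$-stable dimensional types for $0<j<n-1$; collecting all of these over all $k$ (and discarding the single point $g$, or rather absorbing it into one adjacent piece if one insists on covering $g$ — but since the statement says ``sum of closed-open subspaces'', one must be slightly careful that $\{g\}$ is itself closed but not open, so in fact the honest statement is that $X\setminus X^{(n-1)}$ is such a sum, or one re-reads the theorem as allowing the top piece to be $(n-1,1)$-stable) yields the required decomposition of $X$.

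For the finiteness half, the key point is to bound the number of $(n,1)$-stable dimensional types by a function of the (finite) number of $(j,1)$-stable types for $j<n$. Here is the mechanism I expect to work. Let $X$ be $(n,1)$-stable with $X^{(n)}=\{g\}$ and closed-open base $\{U_k\}$ at $g$. Each annulus $U_k\setminus U_{k+1}$ is, by the decomposition above, a countable sum of closed-open subspaces with $(j_i,1)$-stable types, $j_i<n$. The stability hypothesis on $X$ forces a strong ``saturation'': one cannot delete any closed-open complement through $g$ without lowering the type, which means that up to $=_E$ the sequence of annuli must look, from some point on, like a fixed ``maximal'' periodic pattern built from the finitely many lower stable types — essentially each annulus must realize, cofinally often, the $=_E$-supremum of the finitely many available lower stable configurations. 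This is the same phenomenon already visible in the two $(1,1)$-stable types ($G$ versus $I$) and in the five $(2,1)$-stable types listed: each is a ``sum with $1$ at the end'' of a fixed repeated block, the blocks being exactly the $\leq_d$-maximal combinations of the lower stable types (this is why the five $(2,1)$-types correspond to $\{G\}$, $\{I\}$, and the three ways of combining $\omega^2$-type behavior with $I$-type behavior with sums). So the count of $(n,1)$-stable types is bounded by the number of $\leq_d$-maximal antichains (or rather $\leq_d$-up-directed combinations) among the finitely many $(j,1)$-stable types with $j<n$, times a bounded number of ``gluing'' choices, hence finite.

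The main obstacle, and the part requiring genuine care rather than routine bookkeeping, is making precise and correct the claim that stability forces this periodic-saturated normal form — i.e. that a $(n,1)$-stable $X$ is determined up to $=_E$ by finitely much data drawn from the lower stable types. The subtlety is that ``$X\setminus Y$ closed-open, $g\in Y$, $Y<_E X$ impossible'' is a statement about all such $Y$, and one must translate it into a purely combinatorial condition on the multiset-valued sequence $(U_k\setminus U_{k+1})_{k\in\omega}$ modulo finite modifications and modulo $=_E$ on each coordinate. Concretely I would: (i) reduce each annulus to its $=_E$-class, which by induction lies in a finite set $S_{n-1}$ of ``annulus types''; (ii) observe that $=_E$-classes of spaces $X$ with $X^{(n)}=\{g\}$ correspond to eventually-constant-up-to-reshuffling $\omega$-sequences over $S_{n-1}$, i.e. to subsets of $S_{n-1}$ (the set of annulus types occurring infinitely often) together with a finite tail; (iii) show the $(n,1)$-stable ones are exactly those for which the infinitely-often set is, in a suitable sense, $\leq_d$-maximal and ``self-absorbing'' (adding a copy of anything already present, or deleting a proper closed-open chunk, does not increase it) — and there are only finitely many such subsets of the finite set $S_{n-1}$. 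Verifying step (iii) rigorously — in particular that local stability of $X$ is equivalent to this global maximality condition, with no pathological intermediate types — is where the real work lies, and it is essentially a careful generalization of the hand-checked cases $n=1,2$; I would expect to isolate it as a lemma about the poset $\mathcal A$ before assembling the theorem.
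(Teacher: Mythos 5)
Your plan follows the paper's proof essentially step for step: induction on $n$, decomposition of the annuli $U_k\setminus U_{k+1}$ via Theorem \ref{eoc} into closed-open pieces whose types lie in the finite set $S_{n-1}$, and the observation that stability forces the dimensional type of $X$ to be determined by which lower stable types occur in the annuli and whether each occurs at most once or infinitely often, which leaves only finitely many possibilities. The step you rightly flag as requiring genuine work (your point (iii), that stability is equivalent to this saturated normal form) is exactly the point the paper disposes of in a single sentence, so your account is, if anything, the more careful of the two.
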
 
\begin{proof} For $n=1$ and $n=2$ the
theses are fulfilled. Let $S_{n-1}$ be the family of all $(k,1)$-stable dimensional types, where $k<n$. For inductive proof, assume that $S_{n-1}$ is finite and each space $Y\in \mathcal A$, such that $N(Y) \leq n$,  is a sum of  closed-open subspaces  with  $(k,1)$-stable dimensional types.
Consider a space $X$ with  the $(n,1)$-stable dimensional type such that $X^{(n)}=\{ g \}$.  By  Theorem \ref{eoc}, the subspace $X\setminus \{g\}$ can be divided into pairwise closed-open sets  with Cantor-Bendixon rank equal to $n$. Therefore and by the induction assumptions, the subspace $X\setminus \{g\}$ can be divided into finitely many closed-open sets,  each of which consists of pairwise disjoint closed-open sets with the same $(k,1)$-stable dimensional type,  belonging to   $ S_{n-1}$. Denote $\mathcal V$ the family of all relevant dimensional types  for $X\setminus \{g\}$.   
Fix a decreasing base $\{ U_n: n \in \omega\}$ of open neighborhoods of the point $g$ such that each $U_n \setminus U_{n+1}$ contains a single closed-open set which dimensional type is from    $\mathcal V$ or infinitely many such sets. Since $X$ has the $(n,1)$-stable dimensional type, therefore  the dimensional type of $X$ depends only on whether any dimensional type of $\mathcal V$  occurs in $U_n \setminus U_{n+1}$ at most once or at least infinitely many times.  Such opportunities are finitely many. \end{proof}

We do not know whether the cardinality of families  $S_n$ may  well be bounded by a polynomial in $n$. However, the concept of $(k,1)$-stable dimensional types makes it easier to understand the results on poset $(\mathcal A, <_E)$ and simplifies some of the reasoning from the paper \cite{gil}. In our opinion, combinatorial properties of families  $S_n$ require further examination, but that is a topic for future research.

		\section{Dimensional types of uncountable subspaces of $\omega_1$}
Let $\mathbb Y$ be the sum of all countable ordinals. Thus,  $\omega_1$ contains a  homeomorphic copy of $\mathbb Y$. Hence,  $\mathbb Y$ is a metric space which has a smaller dimensional type than  the not metric space  $\omega_1$. 
  The space  $\mathbb Y$ is special among the uncountable subspaces 
of $\omega_1$.    Namely, if   $X\subset \omega_1$ is a metric subspace, then $X<_E \mathbb Y$. Indeed, take a  open cover $\mathcal U$ of $X$, which consists of countable sets.  Then, use any disjoint open refinement of $\mathcal U$ to construct a required embedding.

			\begin{pro}\label{t7}  If a subspace $X\subseteq \omega_1$  contains a homeomorphic copy of any countable ordinal, then  $\mathbb Y <_E X.$ 
			\end{pro}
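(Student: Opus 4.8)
The plan is to build an embedding of $\mathbb Y = \bigoplus_{\alpha<\omega_1}(\alpha+1)$ into $X$ by transfinitely choosing, for each countable ordinal $\alpha$, a homeomorphic copy of $\alpha+1$ inside $X$, making sure these copies are pairwise disjoint and that their union carries the sum topology. The hypothesis gives us a copy of every countable ordinal in $X$; the whole difficulty is organizing countably many disjoint such copies so that the resulting subspace is genuinely a topological sum.

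First I would record the key structural fact about $X\subseteq\omega_1$ that makes disjointification easy: every point $\xi\in X$ has a countable neighborhood in $X$, namely $X\cap[0,\xi]$ has order type at most $\xi<\omega_1$ and the final segments $X\cap(\eta,\xi]$ form a neighborhood base at $\xi$ of countable sets. Consequently, by a routine induction, whenever $C\subseteq X$ is a homeomorphic copy of $\alpha+1$ with top point $g$ (so $g = \sup C$ in the copy, hence $C$ is closed in $X$ and $C\setminus\{g\}$ has a neighborhood base at $g$ made of clopen-in-$C$ sets), one can in fact choose $C$ to lie inside an arbitrarily small final segment $X\cap(\eta,\xi]$ for suitable $\xi$; indeed the image of $\alpha+1$ is a countable closed subset of $X$, and by shrinking along the neighborhood base at its top point we may assume it sits above any prescribed countable ordinal. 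This ``pushing up'' is the device that produces disjointness.

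Next I would carry out the main construction. Enumerate $\omega_1$ as the index set and recursively pick copies $C_\alpha\subseteq X$ of $\alpha+1$ for $\alpha<\omega_1$ as follows: having chosen $C_\beta$ for all $\beta<\alpha$, note that $\bigcup_{\beta<\alpha}C_\beta$ is a countable subset of $X$ (since each $C_\beta$ is countable and $\alpha$ is countable), so it is bounded in $\omega_1$ by some $\xi_\alpha<\omega_1$; using the hypothesis together with the ``pushing up'' observation, choose a copy $C_\alpha$ of $\alpha+1$ with $C_\alpha\subseteq X\cap(\xi_\alpha,\omega_1)$. Then the $C_\alpha$ are pairwise disjoint. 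Because each $C_\alpha$ is contained in an interval of ordinals that is strictly above every previously used point and the $C_\alpha$ accumulate nowhere against each other (any point of $X$ has a countable, hence bounded-index, neighborhood, so it meets only countably many $C_\alpha$, and by the same boundedness argument only finitely many of those are relevant near that point after one more shrink), the subspace $\bigcup_{\alpha<\omega_1}C_\alpha\subseteq X$ carries exactly the sum topology of the $C_\alpha$, i.e. it is a homeomorphic copy of $\mathbb Y$. Hence $\mathbb Y<_E X$.

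The main obstacle is the last topological point: verifying that the union of the $C_\alpha$ has the sum topology rather than some finer or coarser identification at limit behavior. The naive disjointness is not enough — one must ensure no point of one $C_\alpha$ is a limit of points drawn from infinitely many other $C_\beta$'s. I expect to handle this exactly as in the metric $\sigma$-discrete arguments earlier in the paper: replace the crude ``bounded by $\xi_\alpha$'' choice by a choice that also separates $C_\alpha$ from a chosen clopen neighborhood of the already-constructed part, which is possible because $X\subseteq\omega_1$ is zero-dimensional and each partial union is a countable, hence closed-and-somewhere-clopen, piece; an open cover by countable sets plus a disjoint open refinement (available since $X$, being a metric — indeed $\sigma$-discrete — subspace of $\omega_1$, satisfies Theorem \ref{eoc}) then packages the copies into a genuine topological sum. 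So the argument reduces to the same refinement technique already used for embeddings into $\mathbb Y$ in the paragraph preceding the statement.
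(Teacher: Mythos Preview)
Your core idea—recursively placing copies above all previously chosen ones—is exactly the paper's approach. The paper's proof is one sentence: it asserts a family $\{\mathcal I_\alpha:\alpha<\omega_1\}$ of pairwise disjoint closed intervals of $\omega_1$ with each $X\cap\mathcal I_\alpha$ containing a copy of $\omega^\alpha+1$, and notes that $X\cap\bigcup\mathcal I_\alpha$ then contains $\mathbb Y$. Your recursion is the natural way to produce such intervals.

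Two places where you go astray. First, the ``shrinking'' step is wrong for general $\alpha+1$: a neighborhood of the top point of a copy of $\alpha+1$ corresponds to an interval $(\beta,\alpha]$, which is homeomorphic to $\alpha+1$ only when $\alpha$ has the form $\omega^\delta$. This is precisely why the paper works with $\omega^\alpha+1$ rather than $\alpha+1$. (One also needs the top point to lie above the prescribed bound $\xi_\alpha$; this holds because the top point of a copy of $\omega^\delta+1$ lies in $C^{(\delta)}\subseteq\omega_1^{(\delta)}$, hence is $\geq\omega^\delta$, so taking $\delta$ large suffices. The paper leaves this implicit.)

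Second, your worry about the sum topology is unnecessary, and your proposed remedy is invalid. Once you have $C_\alpha\subseteq(\xi_\alpha,\omega_1)$ with $\xi_\alpha\geq\sup\bigcup_{\beta<\alpha}C_\beta$, each $C_\alpha$ lies in the interval $(\xi_\alpha,\sup C_\alpha]$; these intervals are pairwise disjoint and \emph{clopen} in $\omega_1$ (every interval $(\gamma,\eta]=[\gamma+1,\eta]=(\gamma,\eta+1)$ is clopen). Hence $\bigcup_\alpha C_\alpha$ automatically carries the sum topology—this is exactly what the paper's use of disjoint closed intervals buys. You cannot appeal to Theorem~\ref{eoc} anyway: $X$ need not be metrizable (for instance $X$ could be a club, hence homeomorphic to $\omega_1$ itself), so that theorem does not apply.
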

			
			\begin{proof} Let  $\mathbb F=\{\Iee_\alpha: \alpha < \omega_1 \}$ be a family of closed  and pairwise disjoint  intervals  of $\omega_1$  such that each intersection $X \cap \Iee_\alpha$ contains a homeomorphic copy of $\omega^\alpha +1$. Then  $X \cap \bigcup\mathbb F$ contains a copy of  $\mathbb Y$.
	\end{proof}
	
	Recall that a set $S \subseteq \omega_1$  is   \textit{stationary}, if $S$ intersects  any  closed and unbounded subset of $\omega_1$, compare \cite[p. 78]{kun}. Well-known Solovay's result  says that each stationary set can be divided into uncountably many stationary sets, compare \cite{jec}. 
	Note that, if $X\subseteq \omega_1$ is not stationary, then $X$ is a metric $\sigma$-discrete  space. Indeed, any complement of a closed unbounded set is an union of pairwise disjoint open intervals of ordinals. Each such interval has to be countable. Therefore $X$ is contained in a sum of metric spaces. By Theorem \ref{ms}, it  has to be $\sigma$-discrete. 
	
	\begin{pro}\label{l3} If   $X\subset \omega_1$ is a discrete subspace, then $X$ is not stationary. \end{pro}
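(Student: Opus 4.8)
The plan is to exhibit, for an arbitrary discrete $X\subseteq\omega_1$, an explicit closed unbounded set disjoint from $X$; by the very definition of stationarity recalled just above, this is exactly what has to be produced. I would first dispose of the trivial bounded case: if $X\subseteq\beta$ for some $\beta<\omega_1$, then the tail $[\beta,\omega_1)$ is a club missing $X$, so $X$ is not stationary.

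For the substantive case, assume $X$ is unbounded in $\omega_1$ and set $C=\{\alpha<\omega_1:\sup(X\cap\alpha)=\alpha\}$, the set of ``closure points of $X$''. I would verify three things. First, $C$ is unbounded: given $\beta$, recursively pick a strictly increasing sequence $\beta=\beta_0<\beta_1<\cdots$ with $\beta_{n+1}\in X$ (possible since $X$ is unbounded), and observe that $\alpha:=\sup_n\beta_n$ lies in $C$ and exceeds $\beta$. Second, $C$ is closed: if $\delta$ is a limit of points of $C$, then for every $\beta<\delta$ one finds $\alpha\in C\cap(\beta,\delta)$ and thence a point of $X$ strictly between $\beta$ and $\alpha$, so $\sup(X\cap\delta)=\delta$ and $\delta\in C$. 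Third — and here is where discreteness enters — $C\cap X=\emptyset$: if $\alpha\in C\cap X$, then $X\cap\alpha$ is cofinal in $\alpha$, hence $\alpha$ is a limit ordinal and every neighbourhood of $\alpha$ of the form $(\beta,\alpha+1)$ contains a point of $X$ other than $\alpha$, contradicting the fact that $\alpha$ is isolated in $X$. Putting the three together, $C$ is a club disjoint from $X$, so $X$ is not stationary.

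There is no genuine obstacle here; the only point that wants a little care is the last one, namely checking that membership of $\alpha$ in $C$ forces $\alpha$ to be a two-sided limit point of $X$ in the order topology (which also forces $\alpha$ to be a limit ordinal, since for a successor $\alpha=\gamma+1$ one would have $\sup(X\cap\alpha)\le\gamma<\alpha$). As an alternative one could argue via Fodor's pressing-down lemma in the spirit of the surrounding discussion: were $X$ stationary, so would be the set of limit ordinals lying in $X$, the assignment sending such a limit point $x$ to some $h(x)<x$ with $(h(x),x)\cap X=\emptyset$ is regressive, and a stationary fibre of $h$ would contain two points $\gamma<x<y$ with $\gamma<x$, contradicting the choice $h(y)=\gamma$. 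I would nonetheless prefer the direct club construction above, as it is self-contained and does not invoke Fodor.
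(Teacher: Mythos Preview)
Your argument is correct, but it is not the paper's argument. The paper works on the ``open'' side: using discreteness it chooses, for each $\alpha\in X$, an isolating interval $(a_\alpha,b_\alpha)$ with $X\cap(a_\alpha,b_\alpha)=\{\alpha\}$, arranges (by a WLOG reduction) that these intervals are pairwise disjoint and increase with $\alpha$, and then observes that the complement of the open set $\bigcup_{\alpha\in X}(a_\alpha,b_\alpha)\supseteq X$ is closed (trivially) and unbounded (it contains every endpoint $b_\alpha$). You instead work on the ``closed'' side: your $C=\{\alpha:\sup(X\cap\alpha)=\alpha\}$ is precisely the derived set of $X$ inside $\omega_1$, which for any unbounded $X$ is a club, and discreteness is invoked only at the end to see that $X$ contains none of its own limit points. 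Your route is the standard set-theoretic one and is arguably cleaner, since no care is needed to make anything pairwise disjoint; the paper's route is more hands-on and topological in flavour, matching the surrounding style, but the ``without loss of generality'' ordering of the intervals hides a small amount of work. Your Fodor alternative is also valid and is a third, genuinely different, proof.
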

	\begin{proof} If $X$ is bounded by an ordinal $\alpha < \omega_1$, then $X$ is disjoint to the closed and unbounded interval $(\alpha, \omega_1)$, so we can assume that $X$ is unbounded in $\omega_1$.  Let $\{(a_\alpha, b_\alpha): \alpha \in X \} $ be an uncountable  family of pairwise disjoint intervals such that $X \cap (a_\alpha, b_\alpha) = \{\alpha \}$, for each $\alpha \in X$.  Without loss of generality, we can assume that $ \alpha<\beta $ implies $ a_\alpha < b_\alpha \leq a_\beta < b_\beta.$  We get  that  the complement of an open set $$ \bigcup \{(a_\alpha, b_\alpha): \alpha \in X\} \supset X$$ is unbounded, because it contains  $\{b_\alpha: \alpha \in X\}$. \end{proof}

\begin{pro}\label{t8} If $X$ is a stationary set and $\alpha < \omega_1$, then $X^{(\alpha)}\not=\emptyset$. 
			\end{pro}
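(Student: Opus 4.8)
The plan is to argue by contradiction and reduce everything to Fodor's pressing‑down lemma. Assume $X\subseteq\omega_1$ is stationary and that $X^{(\alpha)}=\emptyset$ for some $\alpha<\omega_1$; since stationary sets are uncountable we may take $\alpha\geq 1$. The first step is to attach to each $\gamma\in X$ a \emph{rank} $r(\gamma)$, namely the largest ordinal $\beta$ with $\gamma\in X^{(\beta)}$. This is well defined and satisfies $r(\gamma)<\alpha$: the set $\{\beta\leq\alpha:\gamma\in X^{(\beta)}\}$ is a nonempty initial segment of $\alpha+1$ that is closed under suprema (because the derivatives at limit stages are intersections), it contains $0$ and omits $\alpha$, hence it equals $[0,r(\gamma)]$ for some $r(\gamma)<\alpha$. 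In particular $\gamma\in X^{(r(\gamma))}\setminus X^{(r(\gamma)+1)}$, so $\gamma$ is isolated in $X^{(r(\gamma))}$, and therefore in the order topology there is an ordinal $f(\gamma)<\gamma$ (for $\gamma\geq 1$) with
$$(f(\gamma),\gamma]\cap X^{(r(\gamma))}=\{\gamma\}.$$

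Next I would press down. The set $X\setminus\{0\}$ is still stationary and $f$ is regressive on it, so by Fodor's lemma there are a stationary $S\subseteq X\setminus\{0\}$ and a fixed ordinal $\gamma_0$ with $f(\gamma)=\gamma_0$ for every $\gamma\in S$. Because $\alpha$ is \emph{countable}, $S=\bigcup_{\delta<\alpha}\{\gamma\in S:r(\gamma)=\delta\}$ is a countable union, and the nonstationary ideal on $\omega_1$ is $\sigma$‑complete; hence some piece $S_\delta=\{\gamma\in S:r(\gamma)=\delta\}$ is stationary. By the definition of the rank, $S_\delta\subseteq X^{(\delta)}$, and every $\gamma\in S_\delta$ satisfies $(\gamma_0,\gamma]\cap X^{(\delta)}=\{\gamma\}$.

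The contradiction is then immediate: $S_\delta$ is stationary, hence uncountable, so it contains two ordinals $\gamma<\gamma'$, both exceeding $\gamma_0$ (since $\gamma_0=f(\gamma)<\gamma$), and both lying in $X^{(\delta)}$. Applying the displayed equality at $\gamma'$ gives $\gamma\in(\gamma_0,\gamma']\cap X^{(\delta)}=\{\gamma'\}$, which is absurd. Therefore $X^{(\alpha)}\neq\emptyset$. (This even upgrades at no cost: if $D$ were a club disjoint from $X^{(\alpha)}$, then $X\cap D$ is stationary and, since $A\subseteq B$ implies $A^{(\beta)}\subseteq B^{(\beta)}$, we would get $(X\cap D)^{(\alpha)}\subseteq X^{(\alpha)}\cap D=\emptyset$, contradicting what was just proved; so in fact $X^{(\alpha)}$ is stationary.)

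I expect the genuine obstacle to be the behaviour at limit $\alpha$: a naive induction "$X$ stationary $\Rightarrow X^{(\alpha)}$ stationary" breaks at limit stages, because a decreasing $\omega$‑sequence of stationary sets can have empty intersection (split $\omega_1$ into countably many stationary pieces and take their tails). The contradiction argument above sidesteps this precisely because the hypothesis $X^{(\alpha)}=\emptyset$ forces all ranks $r(\gamma)$ below the \emph{countable} ordinal $\alpha$, which is exactly what legitimizes the pigeonhole on $r$ via $\sigma$‑completeness of the nonstationary ideal; were $\alpha$ allowed to be $\omega_1$, this step — and the statement — would fail. The only routine points to check carefully are the case analysis (successor versus limit $\gamma$, and $\gamma$ being the minimum of $X^{(r(\gamma))}$) when extracting $f(\gamma)<\gamma$ from isolation, and the elementary facts that $X\setminus\{0\}$ and the intersection of $X$ with a club remain stationary.
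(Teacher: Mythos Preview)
Your proof is correct and shares the same skeleton as the paper's: assume $X^{(\alpha)}=\emptyset$, stratify $X$ by Cantor--Bendixson rank as $X=\bigcup_{\beta<\alpha}\bigl(X^{(\beta)}\setminus X^{(\beta+1)}\bigr)$, and use that $\alpha$ is countable together with the $\sigma$-completeness of the nonstationary ideal. The only real difference is in the endgame. The paper observes that each rank-level $X^{(\beta)}\setminus X^{(\beta+1)}$ is discrete and then invokes its Proposition~\ref{l3} (proved earlier by an elementary disjoint-intervals argument, no Fodor) to conclude each level is nonstationary, hence so is $X$. You instead encode discreteness of a single level as a regressive function and apply Fodor directly, re-deriving the ``discrete $\Rightarrow$ nonstationary'' step inside the proof. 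The paper's route is a line shorter because Proposition~\ref{l3} is already on the shelf; your route is self-contained and, as you note, immediately yields that $X^{(\alpha)}$ is in fact stationary---though that upgrade follows just as easily from the paper's version by the same club-intersection trick you sketch.
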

			\begin{proof} Suppose $X^{(\alpha)} =\emptyset$, where  $\alpha < \omega_1$. Then $$ X= \bigcup \{X^{(\beta)}\setminus X^{(\beta +1)}: \beta < \alpha \} $$ is an union of countably many subspaces with discrete subspace topologies.      Since Proposition \ref{l3} and \cite[p. 78]{kun}, the set  $X$  can not be  stationary. 
				\end{proof}

Following M. Ismail and A. Szyma\'nski \cite{is},   the \textit{discrete metrizability number} of a space $X$, denoted $dm(X),$ is the smallest cardinal number $\kappa$ such that $X$ can be represented as a union of $\kappa$ many discrete subspaces. But  the \textit{ metrizability number}  $m(X),$ is the smallest cardinal number $\kappa$ such that $X$ can be represented as a union of $\kappa$ many metric subspaces. We have the following. 
	\begin{cor}\label{c7} If  $X\subseteq \omega_1$ is a stationary set, then $dm(X) = \omega_1=m(X)$.\end{cor}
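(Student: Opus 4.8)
The plan is to pin down both numbers by a squeeze: I will show
$$\omega_1 \le m(X) \le dm(X) \le \omega_1,$$
so that $m(X)$ and $dm(X)$ (indeed everything between them) are forced to equal $\omega_1$. The inequality $m(X)\le dm(X)$ is immediate from the definitions, since a discrete space is metrizable and hence any decomposition of $X$ into discrete subspaces is in particular a decomposition into metric subspaces.

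For the upper bound, note that $X\subseteq\omega_1$ has cardinality at most $\omega_1$, so writing $X$ as the union of its one-point subspaces exhibits it as a union of at most $\omega_1$ discrete subspaces; hence $dm(X)\le\omega_1$, and therefore also $m(X)\le\omega_1$. The substantive point is the lower bound $m(X)\ge\omega_1$, i.e. that a stationary subset of $\omega_1$ is not a union of countably many metric subspaces. For this I would first isolate the auxiliary fact that \emph{every metric subspace $M\subseteq\omega_1$ is non-stationary}. Indeed, $\omega_1$ is scattered (in a well-ordered space the minimum of any closed subset is isolated in it, so the derivatives strictly decrease and must eventually vanish), and scatteredness passes to subspaces, so $M$ is a metric scattered space; by Theorem \ref{ms}, $M=\bigcup_{n\in\omega}D_n$ where each $D_n$ is closed and discrete in $M$. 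By transitivity of the subspace topology each $D_n$ is a discrete subspace of $\omega_1$, hence non-stationary by Proposition \ref{l3}, and since the non-stationary ideal on $\omega_1$ is a $\sigma$-ideal — the club filter being closed under countable intersections, see \cite[p.~78]{kun} — the countable union $M=\bigcup_n D_n$ is non-stationary. Granting this, if we had $m(X)\le\omega$ we could write $X=\bigcup_{n\in\omega}M_n$ with each $M_n$ a metric subspace, hence each $M_n$ non-stationary, hence $X$ non-stationary, contradicting the hypothesis; thus $m(X)\ge\omega_1$.

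Combining the bounds gives $\omega_1\le m(X)\le dm(X)\le\omega_1$, whence $dm(X)=\omega_1=m(X)$. The only step that is more than bookkeeping is the auxiliary fact, and there the real work has already been done: the scatteredness of subsets of $\omega_1$, Theorem \ref{ms}, Proposition \ref{l3}, and the $\sigma$-completeness of the club filter combine at once. So I do not expect a genuine obstacle here; the care required is simply to use Theorem \ref{ms} (rather than, say, Cantor--Bendixson rank, which for a metric subspace of $\omega_1$ need not a priori be countable) to pass from "metric" to "$\sigma$-discrete with closed discrete pieces", and then to invoke Proposition \ref{l3} piecewise.
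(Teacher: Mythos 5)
Your proof is correct and follows essentially the same route as the paper: the paper's one-line argument is exactly the combination of Theorem \ref{ms}, Proposition \ref{l3}, and the $\sigma$-completeness of the non-stationary ideal that you spell out. Your version merely adds the (sound) bookkeeping of the upper bound $dm(X)\leq\omega_1$ and the observation that $m(X)\leq dm(X)$ reduces both lower bounds to the single claim that metric subspaces of $\omega_1$ are non-stationary.
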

	\begin{proof} A stationary set can not be a union of countably many not stationary subsets. Hence, we get  $dm(X) = \omega_1=m(X)$, using  Theorem \ref{ms} and Proposition \ref{l3}. \end{proof}
	
	\begin{lem}\label{sta} If $X\subseteq \omega_1$ and  $f: X \to \omega_1$ is an embedding, then there exists a closed unbounded set $C$ such that $$ f[X] \cap C =X \cap C$$  \end{lem}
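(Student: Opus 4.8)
The plan is to reduce the statement to the single combinatorial fact that the symmetric difference $X\triangle f[X]$ is non-stationary in $\omega_1$. Granting this, one picks any closed unbounded set $C$ disjoint from $X\triangle f[X]$; then for every $\gamma\in C$ the equivalence $\gamma\in X\iff\gamma\in f[X]$ holds, which is exactly $f[X]\cap C=X\cap C$. Writing $Y=f[X]$ and splitting $X\triangle Y=D\cup E$ with $D=X\setminus Y$ and $E=Y\setminus X$, it then suffices to prove that $D$ is non-stationary: since $f^{-1}\colon Y\to\omega_1$ is again a topological embedding and $f^{-1}[Y]=X$, the non-stationarity of $E=Y\setminus f^{-1}[Y]$ follows by applying the $D$-case to $f^{-1}$.

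To see that $D=X\setminus f[X]$ is non-stationary I argue by contradiction. Suppose $D$ is stationary. For each $\alpha\in D$ we have $f(\alpha)\in f[X]$ while $\alpha\notin f[X]$, so $f(\alpha)\neq\alpha$; hence $D=D_{<}\cup D_{>}$, where $D_{<}=\{\alpha\in D:f(\alpha)<\alpha\}$ and $D_{>}=\{\alpha\in D:f(\alpha)>\alpha\}$, and one of these is stationary (the union of two non-stationary sets is non-stationary). If $D_{<}$ is stationary, then $f$ is regressive on it, so by Fodor's pressing-down lemma (cf. \cite[p. 78]{kun}) it is constant on a stationary subset, contradicting injectivity of $f$. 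If $D_{>}$ is stationary, I would instead push it forward: $f[D_{>}]\subseteq\omega_1$ is a homeomorphic copy of the stationary set $D_{>}$, so it has discrete metrizability number $\omega_1$ by Corollary \ref{c7} (a topological invariant), hence it is not $\sigma$-discrete, hence — by Theorem \ref{ms} together with the remark that every non-stationary subset of $\omega_1$ is $\sigma$-discrete — it is itself stationary. On this stationary set $f^{-1}$ is regressive, since for $\beta\in f[D_{>}]$, writing $\beta=f(\alpha)$ with $\alpha\in D_{>}$, we have $f^{-1}(\beta)=\alpha<f(\alpha)=\beta$; so Fodor's lemma again produces a stationary set on which $f^{-1}$ is constant, contradicting injectivity of $f^{-1}$. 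Either way we reach a contradiction, so $D$, and by symmetry $E$, is non-stationary.

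The step I expect to be the real obstacle is the case $D_{>}$, in which $f$ is not regressive, so one cannot apply Fodor's lemma directly. The resolution is to transport the stationary set forward by $f$ and apply Fodor to $f^{-1}$ instead, and this forces one to use that a homeomorphic image inside $\omega_1$ of a stationary set is again stationary — i.e. the characterization of stationarity of subsets of $\omega_1$ as the failure of $\sigma$-discreteness. This is precisely where Theorem \ref{ms} and Corollary \ref{c7} enter; without the topological invariance of $\sigma$-discreteness one would be stuck, since a regressive (or progressive) injection defined only on a stationary set is in itself no contradiction. Everything else — in particular intersecting the finitely many witnessing clubs and reading off $C$ — is routine.
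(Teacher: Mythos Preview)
Your argument is correct and takes a genuinely different route from the paper's. You reformulate the lemma as ``$X\triangle f[X]$ is non-stationary'' and then kill both halves with Fodor's pressing-down lemma --- directly on $D_{<}$, and on the image $f[D_{>}]$ after transporting stationarity forward via the equivalence ``stationary $\Leftrightarrow$ not $\sigma$-discrete'' for subsets of $\omega_1$, which is exactly what Corollary~\ref{c7} together with the remark before Proposition~\ref{l3} gives. The paper, by contrast, does not invoke Fodor or any of its earlier results at all: it builds the club $C$ by hand, interleaving an $\omega_1$-sequence of points of $X$ with points of $f[X]$ and taking the set of cluster points of this sequence. Your approach has the virtue of making explicit where the embedding hypothesis is consumed (in the topological invariance of $\sigma$-discreteness), and it recycles machinery the paper has already set up; the paper's approach is more self-contained and elementary in the sense that it needs only the definition of ``closed unbounded'', but as written it is terse, leaving to the reader the verification --- which must use continuity of $f$ and $f^{-1}$ --- that each cluster point $\gamma$ actually satisfies $\gamma\in X\Leftrightarrow\gamma\in f[X]$.
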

	\begin{proof} For  countable $X$, the set $\{ \alpha: \sup\{X \cup f[X]\} < \alpha \}$ is what we need. Suppose $X$ is uncountable. Successively by induction choose strictly greater ordinals $x_{n, \alpha}\in X$ and $y_{n, \alpha}\in f[X]$ such that $$ x_{\beta, k} < x_{\alpha,n} < y_{\alpha,n} < x_{\alpha, n+1},  $$ where $k,n \in \omega$ and $\beta < \alpha < \omega_1$. These ordinals constitute an increasing sequence  lexicographical ordered with respect to indices.  Cluster  points of the set  of these ordinals, give the required closed unbounded set. 
	\end{proof}
	
	Obviously, the above lemma follows that disjoint stationary sets have not comparable dimensional types.
	
	\begin{thm} If $X $ is a stationary set, then the poset \mbox{$({\mathcal P}(X)/\!\!\!=_E,\leq_d)$ } contains uncountable  anti-chains and uncountable strictly descending chains. \end{thm}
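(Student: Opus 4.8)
The plan is to combine Solovay's partition theorem with Lemma~\ref{sta}. Using Solovay's result, I would first fix a partition $X=\bigcup\{S_\alpha:\alpha<\omega_1\}$ of $X$ into pairwise disjoint stationary sets. The basic observation --- essentially the remark following Lemma~\ref{sta} --- is that disjoint stationary subsets of $\omega_1$ carry $\leq_d$-incomparable dimensional types: if $S,T\subseteq\omega_1$ are disjoint and stationary and $f\colon S\to T$ were an embedding, then $f$ is in particular an embedding of $S$ into $\omega_1$, so Lemma~\ref{sta} yields a closed unbounded set $C$ with $f[S]\cap C=S\cap C$; picking any point $x$ of the nonempty set $S\cap C$ we would get $x\in f[S]\subseteq T$ while $x\in S$, contradicting $S\cap T=\emptyset$. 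Hence no embedding of $S$ into $T$ exists, and by symmetry the classes $[S]$ and $[T]$ are incomparable.

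\textbf{Uncountable antichain.} I would take the family $\{[S_\alpha]:\alpha<\omega_1\}$. For $\alpha\neq\beta$ the sets $S_\alpha$ and $S_\beta$ are disjoint and stationary, so by the observation above there is no embedding of $S_\alpha$ into $S_\beta$ and none of $S_\beta$ into $S_\alpha$; in particular these $\omega_1$ classes are pairwise distinct and pairwise $\leq_d$-incomparable, i.e.\ they form an uncountable antichain in $({\mathcal P}(X)/\!\!\!=_E,\leq_d)$.

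\textbf{Uncountable strictly descending chain.} For $\alpha<\omega_1$ I would set $X_\alpha=\bigcup\{S_\beta:\alpha\leq\beta<\omega_1\}$. If $\alpha<\alpha'$ then $X_{\alpha'}\subseteq X_\alpha$, hence $X_{\alpha'}<_E X_\alpha$ and $[X_{\alpha'}]\leq_d[X_\alpha]$. To see that the inequality is strict, suppose $f\colon X_\alpha\to X_{\alpha'}$ is an embedding; then $f$ embeds $X_\alpha$ into $\omega_1$, so Lemma~\ref{sta} provides a closed unbounded set $C$ with $f[X_\alpha]\cap C=X_\alpha\cap C$. Since $S_\alpha\subseteq X_\alpha$ is stationary, choose $x\in S_\alpha\cap C$; then $x\in f[X_\alpha]\subseteq X_{\alpha'}$, while $S_\alpha\cap X_{\alpha'}=\emptyset$ because $\alpha<\alpha'$ --- a contradiction. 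Thus $[X_{\alpha'}]<_d[X_\alpha]$ whenever $\alpha<\alpha'$, and $\{[X_\alpha]:\alpha<\omega_1\}$ is an uncountable strictly descending chain.

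The real work sits entirely in the two appeals to Lemma~\ref{sta}; the only point requiring a moment's care is that a topological embedding between subspaces of $\omega_1$ is automatically an embedding into $\omega_1$, so Lemma~\ref{sta} applies verbatim. The essential external ingredient is Solovay's theorem, which both produces the $\omega_1$-sized supply of pairwise disjoint stationary pieces of $X$ and, through the stationarity of each $S_\alpha$, guarantees that the closed unbounded set furnished by Lemma~\ref{sta} cannot avoid the set against which the contradiction is derived. I do not expect any further obstacle.
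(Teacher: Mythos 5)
Your proof is correct and follows essentially the same route as the paper: Solovay's partition of $X$ into $\omega_1$ pairwise disjoint stationary sets, Lemma~\ref{sta} to show disjoint stationary sets have incomparable dimensional types (giving the antichain), and the tail unions of the partition for the strictly descending chain. Your write-up merely spells out the club-intersection contradiction that the paper leaves implicit.
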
 \begin{proof}  Let $\{S_\alpha: \alpha < \omega_1\}\subseteq X$ be a family of pairwise disjoint stationary set, a such family exists since the mentioned above result of R. Solovay. Since Lemma \ref{sta}, elements of this family have not comparable dimensional types. Also for the same reasons, sets $X_\beta = \bigcup \{S_\alpha : \beta < \alpha \}$ constitute an uncountable strictly descending chains, with respect to the order $<_E$. 	
	\end{proof}
	
\section{Generalized  Knaster-Urbanik Theorem}

Generalizing the above proof of Knaster-Urbanik Theorem, and using Theorem \ref{eoc},  we get a proof of the following result by R. Telg$\acute{\mbox{a}}$rsky \cite{tel}, compare \cite{af}.

\textbf{Corollary.} \textit{Any  metric scattered  space is homeomorphic to a subset of an ordinal number.}

\begin{proof} If $X$ is a discrete space, then $X$ can be embedded  into a set of non-limit ordinals, which has to be a subset of some ordinal.  Suppose $X$ is a metric space such that $X^{(\alpha)}=\{g\}$, where $\alpha >0$. Assume that any subspace $Y\subseteq X$ can be embedded into the ordinal $E(Y)$,  as long as $Y^{(\beta)}$ has exactly one point and  $\beta <\alpha$. Without loss of generality, we can assume that $f_Y: Y\to E(Y)$ is an embedding such that $$f_Y[Y^{(\beta)}] =\{\sup E(Y)\}, \mbox{ where } |Y^{(\beta)}|=1 .$$  Let $\{U_n: n\in \omega \}$ be a decreasing base of neighborhoods of $g$ consisting of closed-open sets.  By Theorem \ref{eoc}, there exist pairwise disjoint closed-open sets $Y_{\xi, n} \subseteq U_n \setminus U_{n+1}$ such that for each $Y_{\xi, n}$ has exactly one point derivative $Y_{\xi, n}^{(\beta)}$, where $\beta < \alpha$.   We order ordinals $E( Y_{\xi
, n})$  as follows:  $E( Y_{\xi
, n}) $ followed  by $E( Y_{\nu
, n})$, with respect to the order of first indexes, and with $1$ at the end.   In the next step,  we order similarly ordinals  $E( Y_{\xi
, n+1})$ and place them, keeping their order, after $1$ located at the end of an ordered collection in the previous step. Finally we put the point $g$.  The union of all   $f_{Y_{\xi,m}} $ contained in the corresponding $E(Y_{\xi,m})$, which are ordered as above,   gives the required embedding. 
\end{proof}
If  $\frak m $ is an infinite cardinal number, then $\frak m^+$ denotes the least cardinal number greater than $\frak m $.
Thus,  the above corollary can be formulated more precisely.

\begin{pro} \label{p22}
 Any  metric scattered  space of the cardinality $\frak m$ is homeomorphic to a subset of  an ordinal $\alpha < \frak m^+$.  \end{pro}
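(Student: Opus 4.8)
The plan is to refine the embedding constructed in the preceding Corollary so as to keep track of cardinalities at each stage of the transfinite recursion. The Corollary already shows that every metric scattered space $X$ embeds into some ordinal; what remains is to show that if $|X|=\frak m$ then the target ordinal can be chosen below $\frak m^+$. First I would reduce to the case where $X^{(\alpha)}$ is a single point $\{g\}$ for some $\alpha>0$: indeed, by Theorem \ref{ms} the space $X$ is $\sigma$-discrete, so $X=\bigcup_{n\in\omega}X_n$ with each $X_n$ closed and discrete; an arbitrary metric scattered space of cardinality $\frak m$ is a disjoint sum of at most $\frak m$ many pieces each with one-point iterated derivative (cut $X$ along the closed-open decomposition of $X\setminus X^{(\gamma)}$ supplied by Theorem \ref{eoc}, where $\gamma=N(X)$ if $N(X)$ is a successor, or iterate once more otherwise), and a sum of $\frak m$ many ordinals each $<\frak m^+$ is again an ordinal $<\frak m^+$ because $\frak m^+$ is regular.

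Next I would set up the induction hypothesis in the strong form already used in the Corollary: for every metric scattered $Y$ of cardinality $\le\frak m$ with $|Y^{(\beta)}|=1$ and $\beta<\alpha$, there is an embedding $f_Y\colon Y\to E(Y)$ with $E(Y)<\frak m^+$ and $f_Y[Y^{(\beta)}]=\{\sup E(Y)\}$. For the inductive step, with $X^{(\alpha)}=\{g\}$ and $\{U_n:n\in\omega\}$ a decreasing closed-open base at $g$, Theorem \ref{eoc} gives pairwise disjoint closed-open $Y_{\xi,n}\subseteq U_n\setminus U_{n+1}$ each with one-point $\beta$-derivative, $\beta<\alpha$. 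Since $|X|=\frak m$, for each fixed $n$ the index $\xi$ ranges over a set of size $\le\frak m$, and each $|Y_{\xi,n}|\le\frak m$; applying the induction hypothesis and then concatenating the ordinals $E(Y_{\xi,n})$ in order type $\le\frak m$ (with a $1$ inserted between consecutive blocks, exactly as in the Corollary), then concatenating over $n\in\omega$, and finally appending the point $g$, produces an embedding of $X$ into an ordinal which is an $\omega$-indexed sum of $\frak m$-indexed sums of ordinals each $<\frak m^+$. By regularity of $\frak m^+$ this total is still $<\frak m^+$, which closes the induction.

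The main obstacle — and the only place where something genuinely beyond the Corollary is needed — is the cardinality bookkeeping, i.e. verifying that at every junction the concatenated ordinal stays below $\frak m^+$. This rests entirely on the regularity of $\frak m^+$: a sum $\sum_{i<\lambda}\delta_i$ of ordinals with $\lambda\le\frak m$ and each $\delta_i<\frak m^+$ satisfies $\sum_{i<\lambda}\delta_i<\frak m^+$, since otherwise the partial sums would give a cofinal map from $\frak m$ into $\frak m^+$. One should also check the base case $\alpha$ a successor versus $\alpha$ a limit is handled uniformly by the recursion (no new phenomenon arises at limit $\alpha$ because the derivatives $X^{(\beta)}$ for $\beta<\alpha$ are already under control), and that the sup-hitting normalization $f_Y[Y^{(\beta)}]=\{\sup E(Y)\}$ is preserved under concatenation — both routine, as in the countable case treated in Section \ref{s4}.
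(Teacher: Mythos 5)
Your proposal is correct and follows essentially the same route as the paper: the paper's proof simply observes that the last nonempty derivative occurs at some $\alpha<\frak m^+$ and then reruns the recursion of the preceding Corollary inside $\frak m^+$. You supply exactly the cardinality bookkeeping the paper leaves implicit, namely that a concatenation of at most $\frak m$ many ordinals each below $\frak m^+$ stays below $\frak m^+$ by regularity of $\frak m^+$.
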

\begin{proof} If a  metric  scattered space $X$ has the cardinality $\frak m$ and  $X^{(\alpha)}$ is the last non-empty derivative, then $\alpha < \frak m^+$. It is enough to see that with the same proof as for the above corollary, the space $X$ is embeddable in $\frak m^+$.
\end{proof}

\section{Non-homeomorphic metric scattered spaces}

Let us start with an improvement of   Mazurkiewicz-Sierpi\'nski Theorem \cite[Th\'eor$\grave{\mbox{e}}$me 3]{ms}, which says that there is continuum many non-homeomorphic countable metric scattered spaces.

\begin{pro}\label{p88} The ordinal $\omega^\omega$ contains  continuum many  non-homeomorphic subspaces.
			\end{pro}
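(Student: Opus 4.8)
The plan is to attach to every subset $A\subseteq\omega$ a subspace $X_A$ of $\omega^\omega$, assembled from clopen blocks, so that a purely topological invariant of $X_A$ recovers $A$; since there are $2^{\aleph_0}$ such $A$, this produces $2^{\aleph_0}$ pairwise non-homeomorphic subspaces of $\omega^\omega$.

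As blocks I would use, for each $n\ge 1$, the compact ordinal $\omega^n+1$ and a second space $K_n$ defined as follows: $K_n$ has a distinguished point $g_n$ together with pairwise disjoint sets $C_{j,k}$ ($j,k\in\omega$), each $C_{j,k}$ a clopen copy of $\omega^{n-1}+1$, and a set is declared a neighborhood of $g_n$ precisely when it contains $\{g_n\}\cup\bigcup\{C_{j,k}:j\ge j_0,\ k\in\omega\}$ for some $j_0\in\omega$; for $n=1$ this is the space $I=\omega^2+1\setminus\{\omega\cdot k:0<k\in\omega\}$ of Section \ref{s5}. I would then verify that $K_n$ is a countable scattered space with a countable clopen base, hence metrizable; that $K_n^{(n-1)}$ is a space with the single cluster point $g_n$ all of whose punctured neighborhoods are infinite and discrete, so $K_n^{(n)}=\{g_n\}$ and $K_n^{(n+1)}=\emptyset$; that every neighborhood of $g_n$ splits into infinitely many nonempty clopen pieces and so is not compact; and that every point other than $g_n$ lies in a clopen copy of $\omega^{n-1}+1$ and hence has a compact neighborhood. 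In brief, $g_n$ is the unique point of $K_n$ without a compact neighborhood, and it has Cantor--Bendixson rank $n$, whereas the compact block $\omega^n+1$ has no point without a compact neighborhood.

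Next I would set $X_A=\bigoplus_{n\ge 1}D_n$, with $D_n=K_n$ if $n\in A$ and $D_n=\omega^n+1$ if $n\notin A$, and embed $X_A$ into $\omega^\omega$ as follows. Each $D_n$ has one-point $n$-th derivative, so by Proposition \ref{p9} it embeds into $\omega^{2n}+1$. Putting $\gamma_0=0$ and $\gamma_n=\gamma_{n-1}+\omega^{2n}+1$, the intervals $J_n:=[\gamma_{n-1},\gamma_n)$ are pairwise disjoint, cover $\omega^\omega$, are clopen in $\omega^\omega$ (their endpoints being $0$ or successors), and have order type $\omega^{2n}+1$, hence are homeomorphic to $\omega^{2n}+1$. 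Embedding each $D_n$ into $J_n$ and taking the union of the images yields a subspace of $\omega^\omega$ on which the subspace topology agrees with the topological-sum topology of $\bigoplus_n D_n$, precisely because the $J_n$ are pairwise disjoint clopen sets. Thus $X_A$ embeds into $\omega^\omega$.

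Finally, I would let $\phi(X)$ be the set of $n\in\omega$ such that $X$ has a point of Cantor--Bendixson rank $n$ with no compact neighborhood; this is a homeomorphism invariant. Since every $D_n$ is clopen in $X_A$, a point of $X_A$ has a compact neighborhood iff it has one inside the block containing it, and its rank may be computed inside that block; combined with the properties of the blocks this gives that the points of $X_A$ without a compact neighborhood are exactly the $g_n$ with $n\in A$, of ranks $n$ respectively, so $\phi(X_A)=A$. Hence $X_A\not\cong X_B$ for $A\ne B$, and the map $A\mapsto X_A$ exhibits $2^{\aleph_0}$ pairwise non-homeomorphic subspaces of $\omega^\omega$. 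The arithmetic with the $\gamma_n$ and the passage from a disjoint clopen decomposition to a topological sum are routine; the hard part is the construction of the $K_n$, since what is needed is, at each level $n$, a space whose single point of rank $n$ is its only point lacking a compact neighborhood, so that $\phi$ can recover $A$ level by level. (Tempting alternatives --- $I^{\,n}$, the products $(\omega^{n-1}+1)\times I$, or the one-point-derivative spaces from Proposition \ref{poc} --- all pick up extra points without compact neighborhoods at ranks below $n$, which breaks the argument.)
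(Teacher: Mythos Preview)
Your argument is correct. The construction of $K_n$ is sound (it is, in the notation the paper introduces just after this proposition, the space $I(\omega^{n-1}+1)$), the embedding of the sum into $\omega^\omega$ via the clopen intervals $J_n$ is fine, and your invariant $\phi$ does recover $A$ because each $D_n$ is clopen in $X_A$, so both Cantor--Bendixson rank and local compactness are computed blockwise.

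The paper follows the same overall scheme---encode a binary sequence by a topological sum of blocks with one-point derivatives of increasing rank, then read the sequence back via a level-by-level invariant---but the implementation differs. The paper builds its $n$-th block \emph{recursively}, so that $X(f_1,\dots,f_n)$ depends on all of $f_1,\dots,f_n$: one applies a $G$-step or an $I$-step to the previous block according to $f_n$. The invariant used is the dimensional type of $X_f^{(n-1)}\setminus X_f^{(n+1)}$: if $f_n=0$ this set decomposes into clopen copies of $\omega+1$, while if $f_n=1$ no clopen piece is a convergent sequence. Your construction is more modular---each $D_n$ depends on a single bit---and your invariant, the set of ranks at which local compactness fails, is arguably cleaner than comparing clopen pieces of derivative layers. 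On the other hand, the paper's recursive $G$/$I$ machinery is exactly what gets reused in the next theorem to produce $2^{\mathfrak m}$ non-homeomorphic spaces of cardinality $\mathfrak m$, so its choice is dictated by that generalization.
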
 
			\begin{proof} For a  binary sequence $(f_1, f_2, \dots  )$ define inductively  scattered spaces $X(f_1, f_2, \dots , f_m)$, with  the one-point $m$-derivative   $\{h_m\}$. Put $X(0)= G$ and  $X(1)=I$, where spaces $G$ and $I$  are the same as it is defined in Section \ref{s5}. The cluster points of $G$ and $I$ can be denoted $g_G$ and $g_I$, respectively.  If a space $X(f_1, f_2, \dots , f_n)$ is already defined, then let $$ X(f_1, f_2, \dots , f_n, 0)= X(f_1, f_2, \dots , f_n) \times (G\setminus \{g_G\}) \cup \{(h_n, g_G)\}$$  be a subspace of the product space $X(f_1, f_2, \dots , f_n) \times G.$  And let 
$$ X(f_1, f_2, \dots , f_n, 1)= X(f_1, f_2, \dots , f_n) \times (I\setminus \{g_I\}) \cup \{(h_n, g_I)\}$$			
			   be the subspace of the product space $X(f_1, f_2, \dots , f_n) \times I.$
				
			If $f=(f_1, f_2, \ldots )$ is an infinite binary sequence, then let $X_f$ be the sum of spaces $\{X(f_1, f_2, \ldots, f_n): 0< n \in \omega\}$. So,  we have $X_f^{(\omega)}=\emptyset$. Also, if $0<n$ and $f_n=0$, then the difference $X_f^{(n-1)}\setminus X_f^{(n+1)}$ is a subspace which consists of pairwise disjoint  closed-open (with respect to the inherited topology) sets homeomorphic to a convergent sequence. But if $f_n=1$, then  the difference $X_f^{(n-1)}\setminus X_f^{(n+1)}$ has no closed-open subset  which is homeomorphic to a convergent sequence. Therefore $\{ X_f: f \in 2^\omega\}$ is a family of non-homeomorphic subspaces of the ordinal  $\omega^\omega$, what we need.\end{proof}

			Consider  the sum  of  $\omega$ many copies of a space $X$.  We  defined the spaces $G(X)$ and $I(X)$ by adding a new point $g$, with a countable base of neighborhoods, to this sum. Points belonging to the sum have  unchanged bases of neighborhoods. The point  $g$ has a decreasing base $\{U_n: n \in \omega\}$ such that $U_n \setminus U_{n+1}$ consists of copies of $X$   as closed-open subsets. So,  in $G(X)$  each  $U_n \setminus U_{n+1}$ consists of a single copy of $X$. However,  each  $U_n \setminus U_{n+1}$ consists of infinitely many  copies of $X$ in $I(X)$.			In particular,  $G=G(1)$ and $I=I(1)$.
			
\begin{thm}\label{t88} For each infinite cardinal number $\frak m$, there exist $2^{\frak m}$ many  non-homeomorphic metric spaces of the cardinality $\frak m$, each one with empty $\frak m$-derivative.
			\end{thm}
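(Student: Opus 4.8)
The plan is to follow the template provided by Proposition \ref{p88}, but to allow the binary sequences parametrizing the construction to be replaced by sequences of length $\frak m$ of spaces drawn from a sufficiently rich pool. First I would fix an infinite cardinal $\frak m$ and recall the two building-block operations $G(X)$ and $I(X)$ introduced just above the statement, each of which takes a space $X$ of cardinality at most $\frak m$ (and with empty $\frak m$-derivative) to a new space with a single extra cluster point. The key feature is that $G(X)$ and $I(X)$ are distinguished topologically: in $G(X)$ every punctured neighborhood of the new point contains, as a closed-open subset, exactly one copy of $X$, whereas in $I(X)$ every punctured neighborhood of the new point contains infinitely many pairwise disjoint closed-open copies of $X$ and no closed-open copy of $X$ appearing "only finitely often near $g$." This local dichotomy is exactly the invariant that drove the countable argument, and it survives passing to cardinality $\frak m$.

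The construction itself: index a transfinite sequence of choices by an arbitrary subset $A \subseteq \frak m$, or equivalently a function $f \colon \frak m \to 2$. Start from a fixed base space $X_0$ of cardinality $\frak m$ with empty $\frak m$-derivative — for instance $C(\frak m)$ is not scattered, so instead take $X_0$ to be a metric scattered space of weight $\frak m$ whose $\frak m$-derivative is empty, obtained as the hedgehog-type space of Proposition \ref{poc}/Corollary after it, or simply the sum $\bigoplus_{\alpha < \frak m} (\omega^\alpha+1)$ which has cardinality $\frak m$ and empty $\frak m$-derivative. Then apply, at each successor stage $\beta+1 < \frak m$, the operation $G(\cdot)$ if $f(\beta)=0$ and $I(\cdot)$ if $f(\beta)=1$; at limit stages $\lambda \le \frak m$ take the sum (hedgehog-glued at one point, as in Proposition \ref{poc}) of all the spaces produced so far. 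Because each operation and each limit step increases the Cantor--Bendixson rank by one while keeping cardinality $\frak m$ — here one uses that $\frak m$ is infinite so that $\frak m \cdot \frak m = \frak m$ and countable unions of sets of size $\frak m$ still have size $\frak m$ — the resulting space $X_f$ has cardinality $\frak m$, is metric and scattered (hence $\sigma$-discrete by Theorem \ref{ms}), and has empty $\frak m$-derivative.

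Next I would show $f \ne f'$ implies $X_f \not\cong X_{f'}$. Let $\beta$ be the least ordinal where $f$ and $f'$ differ. Any homeomorphism $X_f \to X_{f'}$ must carry each derivative $X_f^{(\gamma)}$ onto $X_{f'}^{(\gamma)}$, and in particular must match, for every point $p$ of rank $\gamma$, the local structure of $X_f^{(\gamma-1)}$ near $p$ with that of $X_{f'}^{(\gamma-1)}$ near the image point. At the points of rank $\beta+1$ the two spaces disagree on the $G$-versus-$I$ dichotomy described above: on one side every such point has a neighborhood base whose successive differences contain a single closed-open copy of the rank-$\beta$ block, on the other side infinitely many. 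This local invariant is preserved by homeomorphisms, giving the contradiction. One needs to verify carefully that the rank-$\beta$ blocks on the two sides are themselves homeomorphic when $f\restriction\beta = f'\restriction\beta$ (so that the only distinction is the $G/I$ choice at stage $\beta$), which is an immediate induction, and that earlier disagreements do not occur by minimality of $\beta$.

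Finally, counting: there are $2^{\frak m}$ functions $f \colon \frak m \to 2$, and the map $f \mapsto [X_f]$ is injective into homeomorphism classes of metric spaces of cardinality $\frak m$ with empty $\frak m$-derivative, so there are at least $2^{\frak m}$ such classes; there are clearly at most $2^{\frak m}$ spaces of cardinality $\frak m$ up to homeomorphism, so the count is exact. The main obstacle I anticipate is not the cardinal arithmetic (which is routine once one fixes that $\frak m$ is infinite) but the bookkeeping in the limit stages: one must ensure that the hedgehog-gluing used at limit ordinals $\le \frak m$ keeps the space metric (the hedgehog of $\frak m$-many metric spininesses is metric, which is fine) and that the $G/I$ local invariant at stage $\beta$ is genuinely read off correctly even after further $G$ and $I$ operations and limit sums are applied on top of it — i.e. that the later construction does not accidentally create or destroy closed-open copies of the rank-$\beta$ block near a rank-$(\beta+1)$ point. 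Isolating a clean statement of this "persistence of the local type" lemma, analogous to the $(m,1)$-stable dimensional type discussion of Section \ref{s5}, is the technical heart of the argument.
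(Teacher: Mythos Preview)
Your overall strategy --- iterating $G(\cdot)$ and $I(\cdot)$ at successor stages, hedgehog-gluing at limits, and distinguishing the results via the local $G$-versus-$I$ dichotomy at a fixed Cantor--Bendixson level --- is exactly the paper's approach. The concrete gap is your choice of base space. Taking $X_0=\bigoplus_{\alpha<\frak m}(\omega^\alpha+1)$ starts you with a space whose Cantor--Bendixson rank is already $\frak m$; a single application of $G(\cdot)$ or $I(\cdot)$ then creates a point $g$ lying in the $\frak m$-derivative, since every basic neighborhood of $g$ contains a full copy of $X_0$ and hence points of every rank below $\frak m$. All subsequent stages inherit a nonempty $\frak m$-derivative, and the hedgehog you form at the final limit stage $\lambda=\frak m$ adds yet another such point. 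So the conclusion ``empty $\frak m$-derivative'' fails outright. The same choice also spoils your distinguishing argument: the points of rank $\beta+1$ in $X_f$ now include, besides the point created at stage $\beta+1$, a host of points from the copies of $X_0$ embedded everywhere, and those carry no information about $f(\beta)$.

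The paper avoids both problems by starting from the trivial bases $Y(0)=G$ and $Y(1)=I$, so that the block $Y(f_1,\ldots,f_\beta)$ built at stage $\beta$ has cardinality $<\frak m$ and one-point $\beta$-derivative. Cardinality $\frak m$ is obtained only at the very end, by taking the \emph{disjoint} sum $Y(f)=\bigoplus_{\beta<\frak m}Y(f_1,\ldots,f_\beta)$ rather than a hedgehog; this keeps $Y(f)^{(\frak m)}=\emptyset$. With the rank aligned to the construction stage, the invariant $Y(f)^{(\gamma)}\setminus Y(f)^{(\gamma+2)}$ (for non-limit $\gamma$) cleanly reads off the $G/I$ choice at that level, and the persistence worry you flag becomes routine.
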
 
			\begin{proof} Since Proposition \ref{p88}, we can assume that $\frak m$ is an uncountable cardinal.
			For every  binary sequence $f=\{f_\beta: 0<\beta < \frak m\}$  define inductively a scattered space $Y(f_1, f_2, \ldots, f_{\beta })$ as follows. Put $Y(0)= G$  and $Y(1)=I$. 
			Suppose that   metric  scattered spaces $Y(f_1, f_2, \ldots, f_{\delta})$ are already defined,  for  $\delta <\beta$.  If $\beta $ is a limit ordinal, then put $$ Y(f_1, f_2, \ldots, f_{\beta}) = J(\{Y(f_1, f_2, \ldots, f_{\delta}): \delta < \beta  \}). $$			
	If $\beta $ is a non-limit ordinal, then put $$Y(f_1, f_2, \ldots, f_{\beta -1}, 0)=G(Y(f_1, f_2, \ldots, f_{\beta -1}))$$ and $$Y(f_1, f_2, \ldots, f_{\beta -1}, 1)=I(Y(f_1, f_2, \ldots, f_{\beta -1})).$$ 
		Finally, let $Y(f)$ be the sum 
 of spaces $Y(f_1, f_2, \ldots, f_{\beta })$, where $\beta < \frak m$.

By the definition, if $\beta < \frak m$, then each space $Y(f_1, f_2, \ldots, f_{\beta})$ has the cardinality less than $\frak m$. We also have 
$Y(f_1, f_2, \ldots, f_{\beta})^{(\frak m)}= \emptyset,$ hence $Y(f)^{(\frak m)}= \emptyset.$ Bearing above in mind and using  Proposition \ref{p22}, one can check that each $Y(f)$ embedds into $\frak m.$ Since each $Y(f)$ has the cardinality $\frak m$, it remains to show that the family $\{ Y(f): f \in 2^\frak m\}$ contains a subfamily of cardinality $2^\frak m$ consisting of non-homeomorphic metric scattered space. Indeed, if $\gamma < \frak m$ is a non-limit ordinal and $f(\gamma)\not= g(\gamma) $, where $f,g \in 2^\frak m$, then the subspaces $Y(f)^{(\gamma)}\setminus Y(f)^{(\gamma+2)}$ and $Y(g)^{(\gamma)}\setminus Y(g)^{(\gamma+2)}$ are not homeomorphic, since one of them consists of closed-open subsets  homeomorphic to $I$, but the second contains  no homeomorphic copy of $I$. 
			\end{proof}

\end{document}